\title{On the Heapability of finite partial orders}
\definecolor{pastelred}{rgb}{1.0, 0.41, 0.38}
\definecolor{pistachio}{rgb}{0.58, 0.77, 0.45}
\newtheorem{theorem}{Theorem}
\newtheorem{lemma}{Lemma}
\newtheorem{corollary}{Corollary}
\newtheorem{proposition}{Proposition}
\newtheorem{observation}{Observation}
\newtheorem{claim}{Claim}
\newtheorem{definition}{Definition}
\newtheorem{conjecture}{Conjecture}
\newtheorem{open}{Open problem}
\title{On the Heapability of Finite Partial Orders}
\author{J\'anos Balogh\thanks{Supported by the European Union, co-financed by the European
Social Fund (EFOP-3.6.3-VEKOP-16-2017-00002).} \affiliationmark{1}
\and
Cosmin Bonchi\c{s}\thanks{Supported in part by a grant of the Romanian National Authority for Scientific Research and Innovation, CNCS-UEFISCDI, project no. PN-II-RU-TE-2014-4-0270 - FraDys: "Theoretical and Numerical Analysis of Fractional-Order Dynamical Systems and Applications".}  \affiliationmark{2,3}  \and  Diana Dini\c{s}\affiliationmark{2,3}
\and Gabriel Istrate\thanks{Corresponding author: Gabriel Istrate. G. I. and D.D. were supported in part by a grant of Ministry of Research and Innovation, CNCS - UEFISCDI, project number
PN-III-P4-ID-PCE-2016-0842, within PNCDI III,  ATCO: "Advanced techniques in optimization and computational complexity". } \affiliationmark{2,3}
\and 
Ioan Todinca\thanks{Partially supported by the ANR (Agence Nationale pour la Recherche) project GraphEn, ANR-15-CE40-0009.} \affiliationmark{4}
}
\affiliation{
 Institute of Informatics, 
 University of Szeged, Hungary \\
 Department of Computer Science, West University of Timi\c{s}oara, Timi\c{s}oara, Romania.\\
e-Austria Research Institute, Timi\c{s}oara, Romania \\
LIFO, Universit\'e d'Orl\'eans and INSA Centre-Val de Loire, France
  }
\keywords{partial order, heapable sequence, Dilworth's theorem, greedy algorithm, random intervals.}
\begin{document}
\publicationdetails{22}{2020}{1}{17}{4510}

\maketitle

\begin{abstract}
Define a sequence of elements from a partially ordered set to be \emph{heapable} if it can be successively inserted as the leaves of a binary tree, not necessarily complete, such that children nodes are always greater or equal than parent nodes. This is a natural binary analog of the notion of a chain in a poset and an easy extension of a definition for integers due to Byers et al. (2011). A set of elements is called heapable if some permutation of its elements is. 

  We investigate the partitioning of sequences from a poset into a minimal number of
  heapable subsequences. We give an extension of Fulkerson's proof of Dilworth's theorem to decomposition into heapable subsequences which yields as a byproduct a flow-based algorithm for computing such a minimal decomposition. 
  
  On the other hand, for sets and sequences of intervals and for trapezoid partial orders we prove that such minimal decompositions can be computed via simple greedy-type algorithms. 
  
  Second, while the complexity of computing a maximal heapable subsequence of integers is still open, we show that this problem has a polynomial time algorithm for sequences of \emph{intervals}. 
  
  The paper concludes with a couple of open problems related to the analog of the Ulam-Hammersley problem for sets and sequences of random
  intervals. 
\end{abstract}

\section{Introduction}

The {\em longest increasing subsequence} is a classical problem in
combinatorics and algorithmics. Decompositions of (random) permutations into a minimal number of increasing sequences have been studied in the context of the famous \emph{Ulam-Hammesley problem}.  This problem concerns the asymptotic scaling behavior of the length of the longest increasing subsequence (LIS) of a random permutation, and is a problem with deep connections to statistical physics and random matrix theory (for a very readable introduction see \cite{romik2015surprising}).

An interesting variation on the concept of increasing sequence was introduced by Byers et al. \cite{byers2011heapable}: call sequence of integers $A=a_{1},a_{2},\ldots, a_{n}$
\emph{heapable} if numbers $a_{1},a_{2},\ldots, a_{n}$ can be successively
inserted as leaves into a heap-ordered binary tree (not necessarily
complete). Heapability is a "weakly increasing pattern" for integer sequences. The study of patterns in (random) integer sequences (\cite{dmtcs:5011}), has been, of course, quite popular lately, some of the investigated problems even involving increasingly labelled trees (\cite{dmtcs:5691}) or heaps 
(\cite{dmtcs:5796}). 

Heapability was further investigated in \cite{istrate2015heapable} (and,
independently, in \cite{heapability-thesis}). In particular, a subgroup of the
authors of the present papers showed that for permutations one can compute in
polynomial time a minimal decomposition into heapable subsequences, and investigated a problem that can be viewed as an analog of the Ulam-Hammersley problem for heapable sequences (see also
subsequent work in \cite{istrate2016heapability,basdevant2016hammersley,basdevant2017almost,hammersley-fl,chandrasekaran2019maximum}, that extends/confirms some of the conjectures of \cite{istrate2015heapable}). Furthermore, as shown in \cite{istrate2016heapability} one can meaningfully study the analogs of heapability and the Ulam-Hammesley problem in the context of \emph{partial orders}. 

This paper started as a conversation on the heapability of {\em sequences of
  intervals} during a joint Timi\c{s}oara Szeged seminar on theoretical computer
science in November 2015. Its main purpose is \emph{to offer a different perspective on
the concept of heapability, by relating it to well-known results in combinatorics, such as the 
classical theorems of Dilworth and K\H{o}nig-Egerv\'ary. }

Specifically, we prove that the number of classes in a minimal decomposition of a sequence of elements from a poset $P$ into "heapable subsequences" can be obtained as the size of a minimum vertex cover in a certain bipartite graph whose construction directly generalizes the one employed in one classical proof of Dilworth's theorem. As a byproduct, we obtain an efficient algorithm based on network flows for computing such a minimal decomposition. 

This result, together with the ones from \cite{istrate2015heapable} (where such a minimal decomposition
was computed, for integer sequences, via a direct, greedy algorithm) raise the question whether such greedy algorithms exist for other posets except the set of integers. We answer this question in the affirmative way by showing that the result of \cite{istrate2015heapable} is extendible to {\it sets and sequences of intervals}, ordered by the natural partial order. 

The structure of the paper is as follows: in Section \ref{sec:prelim} we review
the main concepts and technical results we will be concerned with. Then, in
Section \ref{sec:main} we prove a result (Theorem~\ref{main:thm}) which shows that part of Fulkerson's  proof of Dilworth's theorem (\cite{fulkerson1956note}) can be extended to characterizing partitions into a minimal number of heapable subsequences. This result provides, as a byproduct, a flow-based algorithm for the computation of such a minimal partition. 

We then investigate, in Section \ref{sec:interval}, the heapability of  sequences of intervals, showing (Theorem~\ref{intervals}) that a greedy-type algorithm computes such a minimal partition. In Section \ref{sec:sets} we show (Theorem~\ref{sets}) that this result  extends to (unordered) {\it sets} of intervals as well. We then give  in Section~\ref{trapez} a second, incomparable, extension (Theorem~\ref{thm:boxes}) from sequences of intervals  to sequences from so-called \emph{trapezoid} partial orders. 

In Section~\ref{maxheap} we investigate the problem of computing a maximal  heapable subsequence of a sequence of elements from a poset.  For sequences of integers the complexity of this problem is open (\cite{byers2011heapable}). 
We show (Theorem~\ref{max}) that it has polynomial time algorithms for sequences of intervals. 

We conclude in Section \ref{sec:open} with some open questions raised by our results. 

\section{Preliminaries} 
\label{sec:prelim}

We will assume knowledge of standard graph-theoretic notions. In particular,
given integer $k\geq 1$, rooted tree $T$ is  {\it $k$-ary} if every node has at
most $k$ children. Given a graph $G=(V,E)$, we will denote by $vc(G)$
the size of the minimum vertex cover of $G$. 

\begin{definition} 
Let $k\geq 1$. A sequence of integers $A=(a_{1},a_{2},\ldots,a_{n})$ is called
{\em $k$-heapable} (or a \emph{
  $k$-ary chain}) if there exists a $k$-ary tree $T$ with $n$ nodes labeled by $a_{1},a_{2},\ldots,a_{n}$, such that for any two nodes labeled
$a_{i},a_{j}$, if $a_j$ is a descendant of $a_i$ in $T$ then $i<j$ holds for their
indices and $a_{i}<a_{j}$. 
\end{definition}

We will be concerned with finite partially ordered sets (posets) only. Sequences
of elements from a poset  naturally embed into this framework by associating, to
every sequence $A=(a_{1},\ldots, a_{n})$ the poset $Q_{A}=\{(i,a_{i}):1\leq i\leq n\}$ with partial order $(i,a_{i})\leq(j,a_{j})$ if and only if $i\leq j$ and
$a_{i}\leq a_{j}$. Given poset $Q$, its subset $B$ is a {\em chain} if the partial
order of $Q$ is a total order on $B$. Subset $C$ is an {\it antichain} if no two
elements $a,b$ of $C$ are comparable with respect to the partial order relation of
$Q$.  Dilworth's theorem (\cite{dilworth1950decomposition}) states that the minimum
number of classes in a chain decomposition of a partial order $Q$ is equal to the
size of the largest antichain of $Q$. 

One particular type of posets we consider is that of {\it permutation
  orders}: $\preceq$ is a \emph{permutation order} if there exists a
permutation $\pi$ of the elements of $U$ such that, for all $a,b\in U$, $a\preceq b$ iff $a<b$ and $\pi^{-1}(a)<\pi^{-1}(b)$. 

Another particular case we will be concerned with is that of {\em interval orders}. Without loss of generality all our intervals will be closed subsets of $(0,1)$. We define a partial order of them as follows: Given intervals $I_1=[a_1,b_1] $ and $ I_2=[a_2,b_2] $ with $a_1<b_1$ and $a_2<b_2$, we say that $I_1 \leq I_2$ if and only if the entire interval $I_1$ lies to the left of $I_2$ on the real numbers axis, that is $b_1 \leq a_2$. For technical reasons we will also require a total ordering of intervals, denoted by $\sqsubseteq$ and defined as follows: $I_{1}\sqsubseteq I_{2}$ if either $b_{1}<b_{2}$ or $b_{1}=b_{2}$ and $a_{1}<a_{2}$. 

We will consider in this paper models of random intervals. Similarly to the model in \cite{justicz1990random} by  "random intervals" we will mean  random subintervals of $(0, 1)$ generated as follows: A random sample $I$ can be constructed iteratively at each step by choosing two random real numbers $a,b\in (0, 1)$ and taking $I=[min(a,b),max(a,b)]$. 

The \emph{patience sorting algorithm}  (\cite{mallows1963patience}) partitions a permutation into a minimal number of increasing subsequences of integers. It works by adding each element in an online fashion to the first subsequence where it can be added, starting a new subsequence if no existing one is compatible. 

We need to briefly review one classical proof  of Dilworth's theorem, due to Fulkerson (\cite{fulkerson1956note}): First, given poset $Q=(U,\leq)$ with $n$ elements,  define the so-called {\it split graph associated to $Q$} (\cite{felsner2003recognition}), to be the bipartite graph $G_{Q}=(V_{1},V_{2},E)$, where $V_{1}=\{x^{-}:x\in U\}$ and $V_{2}=\{y^{+}:y\in U\}$ are independent copies of $U$, and given $x< y\in U$ we add to $E$ edge $x^{-}y^{+}$.
Fulkerson proved that each chain decomposition of $Q$ uniquely corresponds to a 
 matching in $G_{Q}$.  The proof proceeded by employing the classic K\H{o}nig-Egerv\'ary theorem (\cite{koniggrafok,egervary1931matrixok}), stating that the size of a maximum matching in a bipartite graph is equal to the minimum vertex cover in the same graph. This result was applied  to the bipartite graph $G_{Q}$, inferring that 

\begin{proposition} 
The cardinality of a minimum chain decomposition of $Q$ is equal to $n-vc(G_{Q})$, where $vc(G_{Q})$ is the vertex cover number of the split graph $G_{Q}$.   
\label{vc} 
\end{proposition} 

Finally, Fulkerson's  proof of Dilworth's theorem concluded by showing that $n-vc(G_{Q})$  is equal to the size of the largest antichain of $Q$.
 
We extend the definition of $k$-heapable sequences to general posets as follows: 
\begin{definition} 
Given integer $k\geq 1$ and poset $Q=(U,\leq)$ a subset $A$ of the ground set $U$ is called {\it $k$-heapable} (or, equivalently, a {\it $k$-ary chain} of $Q$) if there exists a $k$-ary rooted tree $T$ and a bijection between $A$ and the vertices of $T$ such that for every $i,j\in A$, if $j$ is a descendant of $i$ in $T$ then $i<j$ in $Q$. 
\end{definition} 

We stress the fact that the notion of a $k$-ary chain above is distinct from the notion of $k$-chain that appears in the statement of the Greene-Kleitman theorem (\cite{greene1976structure}). 

\begin{definition} 
The {\it $k$-width of poset $Q$}, denoted by $k$-wd(Q), was defined in \cite{istrate2016heapability} as the smallest number of classes in a partition of $Q$ into $k$-ary chains.
For $k=1$, by Dilworth's theorem, we recover the usual definition of poset width (\cite{trotter1995partially}). 
\end{definition} 

\begin{observation} In the previous definition we partition a \emph{set} into $k$-ary chains. On the other hand in the problem studied in \cite{istrate2015heapable} we partition a permutation (i.e. \emph{sequence} of elements) 
into $k$-heapable subsets. 

There is no contradiction between these two settings, as one can map a permutation $\pi\in S_{n}$ of $n$ elements onto a poset defined as $\{(i,\pi[i]):i=1,\ldots, n\}$ with 
\[
(i,\pi[i])< (j,\pi[j])\mbox{ iff }i<j\mbox{ and }\pi[i]<\pi[j]
\]
\end{observation}

Two important (unpublished) minimax theorems, attributed in \cite{gyarfas1985covering} to Tibor Gallai and ultimately subsumed by the statement that interval graphs are perfect, deal with sets of intervals. We will only be concerned with the first of them, that states that, given a set of intervals $J$ on the real numbers line the following equality holds:

\begin{proposition}
	The minimum number of partition classes of $J$ into pairwise disjoint intervals is equal to the maximum number of pairwise intersecting intervals in $J$. 
	\label{gallai1}
\end{proposition}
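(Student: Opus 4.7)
The plan is to prove two inequalities, one for each direction of the equality. For the lower bound on the partition size, I would observe the one-dimensional Helly property: any family of pairwise intersecting closed intervals on the real line has a common point. Hence if $I_1,\ldots,I_k$ are pairwise intersecting, they share a point $p$, and no two of them can lie in the same class of any partition into pairwise disjoint intervals. This immediately yields that the minimum number of partition classes is at least the maximum size of a pairwise intersecting subfamily.

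For the matching upper bound, I would exhibit an explicit greedy partition. Sort the intervals of $J$ in non-decreasing order of their left endpoints (breaking ties arbitrarily), and process them one by one, assigning each interval to the smallest-indexed class $c$ such that it is disjoint from every interval already placed in class $c$; if no such class exists, open a new one. Let $K$ be the number of classes used and let $I=[a,b]$ be an interval whose assignment caused the algorithm to first use class $K$. By the rule, for every $j<K$ there is an interval $I_j=[a_j,b_j]$ previously placed in class $j$ that is not disjoint from $I$. Since $I_j$ was processed earlier, $a_j\le a$, and since $I_j\cap I\ne\emptyset$, we must have $a\le b_j$. Thus each $I_j$ contains the point $a$, and so does $I$ itself, giving a family of $K$ intervals all containing $a$, which is in particular pairwise intersecting. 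This shows that $K$ is at most the maximum number of pairwise intersecting intervals, completing the proof.

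The main step to be careful about is the greedy argument: one has to verify the invariant that a "blocking" interval exists in \emph{every} class $j<K$ (not merely in some classes), which follows from the greedy rule that $I$ is put in the minimum available class. A small additional subtlety is the treatment of intervals that touch only at an endpoint; the proof goes through uniformly if one fixes, once and for all, the convention (closed intervals, intersection meaning non-empty common set) already adopted in the Preliminaries. No further technical difficulty is anticipated, and the argument is self-contained, relying only on the sorting of endpoints and the Helly-type observation in one dimension.
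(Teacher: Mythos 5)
Your proof is correct, but note that the paper itself does not prove this proposition at all: it is quoted as one of Gallai's (unpublished) minimax theorems, attributed via Gy\'arf\'as, and treated as subsumed by the perfection of interval graphs. What you supply is the standard elementary argument behind that classical fact: the trivial lower bound (any pairwise intersecting family must occupy distinct classes --- the Helly property is actually not needed here, since pairwise intersection alone forbids two such intervals from sharing a class; Helly only adds the common point, which you then reuse implicitly in the upper bound), plus the first-fit colouring of intervals sorted by left endpoints, where the interval that first opens class $K$ yields $K$ intervals through its left endpoint. This buys a self-contained, constructive proof where the paper only has a citation, and the greedy colouring is in the same spirit as the best-fit algorithms the paper develops later for $k$-ary chains. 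One point to tighten: the paper identifies the first quantity with the $1$-width of its interval order, under which $I_1\leq I_2$ whenever $b_1\leq a_2$, so intervals touching only at an endpoint count as comparable (i.e.\ as ``disjoint'' for the purposes of a class/chain), and ``pairwise intersecting'' should correspondingly be read as pairwise incomparable (strict overlap $a_2<b_1$ and $a_1<b_2$). Your argument survives this reading --- with $a_j\leq a<b_j$ the blocking intervals still pairwise overlap strictly --- but the convention you invoke (``intersection meaning non-empty common set'') is not quite the one the paper's order induces, so you should fix one reading and carry it through both directions of the inequality.
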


Of course, the first quantity in Proposition~\ref{gallai1} is nothing but the 1-width of the partial order $\leq$ on intervals. The scaling of (the expected value of) this parameter for sets $R$ of $n$ random intervals  has the form (\cite{justicz1990random})
\[
E[1\mbox{-wd(R)}] = \frac{2}{\sqrt{\pi}}\sqrt{n}(1+o(1)).
\]

\subsection{A graph-theoretic interpretation} 
\label{interpret} 

Problems that we are concerned with have, it turns out, an algorithmic interpretation that is strongly related to problems of computing the maximum independent sets and the chromatic number of various classes of perfect graphs. The idea first appeared (somewhat implicitly) in \cite{heapability-thesis}. In this subsection we make it fully explicit as follows: 
\begin{definition} 
Given a directed graph $G=(U,E)$, call a 
set of elements $W\subseteq U$ a \emph{$k$-treelike independent set of $G$} if there is a rooted $k$-ary tree $T$, not necessarily complete, and a bijection $f:V(T)\rightarrow W$ such that, for all vertices $v,w$ of $T$, if $v$ is an ancestor of $w$ in $T$ then $f(v)$ and $f(w)$ are {\bf not} connected by an edge in $G$. 

A poset can, of course, be viewed as a directed graph, so the previous definition applies to posets as well. 
We can apply the concept to \emph{undirected} graphs as well by identifying such a graph with its oriented version containing, for any undirected edge $e$, both directed versions of $e$. 
\end{definition} 

For undirected graphs a $1$-treelike independent set of $G$ is simply an independent set: indeed, the condition in the definition simply enforces the nonexistence of edges between the vertices in $W$. Paralleling the case $k=1$ (for which a polynomial time algorithm is known, in the form of patience sorting), we can restate the open problem from \cite{byers2011heapable} as the problem of computing a maximum $2$-treelike independent set: 

\begin{proposition}  Computing the  longest $k$-heapable subsequence of an arbitrary permutation (order) $\pi$ is equivalent to 
computing a maximum $k$-treelike independent sets in the digraph induced by the transitive closure of the Hasse diagram of the associated partial order $P_{\pi}$.  
\end{proposition}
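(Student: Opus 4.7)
The plan is to establish the equivalence by noting that both notions describe the same combinatorial object: a subset $W\subseteq U$ together with a rooted $k$-ary tree $T$ and a bijection $f:V(T)\to W$, subject to a compatibility condition on ancestor-descendant pairs that, in both cases, is a reformulation of strict comparability in $P_\pi$. I would carry this out by unpacking the definitions on each side and then observing that they coincide.

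First, I would rewrite the $k$-heapability condition so that it refers only to $P_\pi$ and not to the sequence $\pi$ itself. Given a subsequence indexed by positions $i_1<i_2<\ldots<i_m$, the conjunction ``$i<j$ and $a_i<a_j$'' appearing in the definition of $k$-heapable is precisely the strict inequality $<_{P_\pi}$ on the corresponding elements $(i,a_i)$ and $(j,a_j)$. Hence a $k$-heapable subsequence of $\pi$ is exactly a subset $W\subseteq U$ admitting a $k$-ary tree $T$ with bijection $f:V(T)\to W$ such that, along every tree edge, $f$ maps the ancestor to an element that is $<_{P_\pi}$ the descendant. Second, I would observe that the arcs of the digraph $P$, taken as the transitive closure of the Hasse diagram of $P_\pi$, encode exactly the relation $<_{P_\pi}$, so the above tree condition is equivalent to the condition from the definition of a $k$-treelike independent set of $P$ once ``connected in $P$'' is interpreted consistently with the orientation of the tree and of the arcs. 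The same tree $T$ and the same bijection $f$ then witness both structures simultaneously, giving the required bijection between $k$-heapable subsequences of $\pi$ and $k$-treelike independent sets in $P$, with the cardinality of $W$ preserved.

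The main obstacle is the careful matching of the tree orientation with the direction of the arcs of $P$ and with the side-condition in the definition of $k$-treelike independent set; in particular one must check that the phrasing used in the digraph definition translates faithfully, in both directions, to the heapability condition on the permutation, and that no tree arising on one side is lost on the other. Once this bookkeeping is done, both maximization problems share the same optimum, and since $P$ is computable from $\pi$ in polynomial time, the two problems are also equivalent in the algorithmic sense, as claimed.
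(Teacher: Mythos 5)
Your overall strategy (unfold both definitions and observe that the same tree $T$ and bijection $f$ witness both notions) is in line with how the paper treats this proposition: the paper offers no argument at all and presents it as a mere restatement. However, the step you defer as ``bookkeeping'' is precisely where the content lies, and as you have written it the identification does not go through. The definition of a $k$-treelike independent set requires that for every ancestor--descendant pair the images $f(v),f(w)$ are \textbf{not} connected in $P$, whereas $k$-heapability requires the positive condition $f(v)<_{P_\pi} f(w)$. If, as you assert, the arcs of $P$ ``encode exactly the relation $<_{P_\pi}$'' (which is what the transitive closure of the Hasse diagram gives), then no consistent reading of ``connected'' turns one condition into the other: under the symmetric reading the tree condition forces ancestor--descendant pairs to be \emph{incomparable} (so for the identity permutation the maximum treelike independent set has size $1$ while the whole sequence is $k$-heapable), and under either one-directional reading the condition trivializes (any subset, listed along a path in a suitable linear-extension order, qualifies). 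A single example already separates heapability from treelike independence in the comparability digraph under \emph{any} arc-based reading: for the decreasing permutation $n,n-1,\ldots,1$ the digraph has no arcs, so every subset is vacuously treelike independent, yet the longest $k$-heapable subsequence has length $1$.

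What is actually needed is to work in the \emph{complement} of that digraph, i.e.\ the digraph with an arc $x\to y$ exactly when $x\not<_{P_\pi} y$; there, ``no arc from the ancestor's image to the descendant's image'' is literally the statement $f(v)<_{P_\pi} f(w)$, and your two-way translation (same tree, same bijection, cardinality preserved, polynomial-time computability of the digraph) then works verbatim, recovering for $k=1$ the usual independent sets of the permutation graph. So the gap is not orientation bookkeeping but a missing complementation: your proof must either pass explicitly to this complement digraph or explicitly reformulate the treelike condition as requiring an arc from ancestor to descendant, and it currently does neither; it asserts the coincidence of a condition with what is, under the paper's definitions taken literally, its negation.
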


When $k$ is implied, we will informally use the name \emph{maximum heapable subset} for the problem of computing a maximum $k$-ary chain of an arbitrary permutation. Similarly, the problem of partitioning the set into $k$-ary chains can be regarded as a generalization of graph coloring: define a  \emph{$k$-treelike coloring of a partial order $P$} to be a partition of the universe $U$ into classes that induce $k$-treelike independent sets. The {\it $k$-treelike chromatic number} of a partial order $P$ is the minimum number of colors in a $k$-treelike coloring of $P$.

The main result from \cite{istrate2015heapable} is equivalent to the following (re)statement:
\begin{proposition} 
For $k\geq 2$ the greedy algorithm of Figure~\ref{alg:greedy1} computes an optimal $k$-treelike coloring of permutation partial orders. 
\end{proposition}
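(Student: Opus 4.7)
The plan is an exchange-type argument comparing the greedy procedure with an arbitrary optimal $k$-treelike coloring. The greedy policy (Figure~\ref{alg:greedy1}) processes the permutation $a_1,\ldots,a_n$ in order and maintains a forest of partially built $k$-ary heap-ordered trees; when $a_i$ arrives it is attached as a child of the node carrying the largest label that is (i) strictly smaller than $a_i$ and (ii) still has fewer than $k$ children, a new tree being opened only if no such node exists. Correctness of the output as a $k$-treelike coloring is clear from the construction, so the real content of the claim is optimality.

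I would formalize the state after processing the prefix $a_1,\ldots,a_i$ by the multiset $F_i$ of \emph{open slots}: each node $v$ with $d(v)<k$ children contributes $k-d(v)$ copies of its label to $F_i$. The key structural invariant I would maintain is a majorization-type comparison against any optimal coloring OPT with analogous multiset $F_i^{OPT}$: for every threshold $t$, the number of slots in $F_i$ with label smaller than $t$ is at least the corresponding count in $F_i^{OPT}$. Intuitively, greedy's frontier of usable parents dominates any optimal frontier, which is exactly what later elements need in order to find a parent smaller than themselves, and so in particular greedy is forced to open a fresh tree only when OPT is forced to do so as well.

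To push the invariant from step $i$ to step $i+1$, consider the insertion of $a_{i+1}$. If greedy attaches it to parent $p_G$ and OPT to parent $p_O$, then by the invariant combined with greedy's local choice one gets $p_G\ge p_O$; both insertions delete one slot (at $p_G$ respectively $p_O$) and introduce $k$ new slots at $a_{i+1}$ itself, labelled identically in both forests, so the threshold counts are preserved. The boundary cases to check are when one decomposition opens a new tree while the other does not, and here the invariant together with the fact that $a_{i+1}$ is larger than all previously placed elements of its own color both in greedy and in OPT gives what is needed. Summing over steps shows that the greedy forest never has more trees than OPT. The main obstacle I foresee is the bookkeeping for the multiset $F_i$ when several slots share a label or when $a_{i+1}$ falls below every open slot in one of the two decompositions; I expect this to reduce to a short case analysis, made especially clean by the hypothesis $k\ge 2$, since each freshly placed element then immediately refreshes at least two slots and thereby absorbs the single slot consumed at its parent.
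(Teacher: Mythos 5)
Your overall scheme --- open-slot multisets and a domination-type invariant against an optimal coloring, concluding that greedy opens a new tree only when the optimum does --- is the same machinery this paper uses for its interval analogue (Lemma~\ref{lemma:1} and Lemma~\ref{bar}); for the present proposition the paper itself gives no proof, deferring to \cite{istrate2015heapable}. However, as written your argument has a genuine gap, and it sits exactly in the boundary case you defer: the step where greedy attaches $a_{i+1}$ to a parent while the fixed optimal coloring makes $a_{i+1}$ a root. An arbitrary optimal coloring may do this even though a compatible slot is open in its own partial forest, and then the invariant is destroyed: the optimum gains $k$ open slots while greedy gains only $k-1$ and loses a small one. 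Concretely, take $k=2$ and the input $5,8,7,6$. Greedy builds the tree with root $5$ and children $8,7$ and must open a second tree at $6$; the coloring consisting of the tree $5$ with children $8,6$ together with the singleton $7$ is also optimal (two trees are unavoidable, since $8,7,6$ could only be children of $5$). After processing $5,8,7$, greedy's open slots are $\{7,7,8,8\}$ whereas this optimum's are $\{5,7,7,8,8\}$: the optimum has strictly more slots below $t=6$ (and more slots altogether), so your inequality fails, and at the arrival of $6$ greedy opens a new tree while the optimum does not, so the step-by-step claim that greedy is forced only when the optimum is forced is false against an arbitrary optimum. Two further formulation issues: letting $t$ exceed all labels, your invariant asserts that greedy has at least as many open slots, hence at least as many trees, as the optimum on every prefix, which is the wrong direction (and false in the same example; the paper's notion of domination has the cardinality inequality the other way and compares only the first coordinates of the sorted vectors); and the claim $p_G\ge p_O$ is not forced by the invariant, although that particular step of your case analysis can be repaired by invoking the invariant at threshold $a_{i+1}$ instead.

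What is missing is a normalization of the optimum before the induction starts. Either (i) prove, by an exchange argument, that some optimal coloring never makes an element a root while it still has an open slot of smaller label available at that element's arrival, and run the domination induction only against such an optimum --- then the troublesome subcase disappears and the remaining cases do propagate the (correctly oriented) invariant; or (ii) replace the comparison with a fixed optimum by the stronger induction that for every prefix there exists an optimal coloring agreeing with greedy on that prefix, repairing the optimum by a swap at each disagreement, in the style the paper uses in Section~\ref{maxheap} for maximum heapable subsets of intervals. Without one of these steps the comparison with an arbitrary optimal coloring does not close.
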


\begin{figure}[h]
\begin{center}
	\fbox{
	    \parbox{12cm}{
		        \textbf{Input:} Permutation order $P$ specified by permutation $\pi\in S_n$.\\
				\textbf{Output:} A $k$-treelike coloring of $P$. \\
\\		
\textbf{let} $F=\emptyset$ \\
			\textbf{for} $i:=1$ to $n$ do:
			\begin{itemize}
				\item[] \textbf{if} $\pi(i)$ cannot become the child of any node in $F$. 				
				\begin{itemize}
					\item[] \textbf{then}  
					\item[] \hspace{1cm} start a new tree with root  $\pi(i)$.				
					\item[] \textbf{else} 
					\item[] \hspace{1cm} make $\pi(i)$ a child of the largest value $\pi(j)<\pi(i)$, 
					\item[] \hspace{1cm} $j<i$ that has fewer than $k$ children.  
				\end{itemize}
			\end{itemize}
	    }
	}
\end{center}
\caption{The greedy best-fit algorithm for $k$-tree-like colorings of permutation posets.} 
\label{alg:greedy1}
\end{figure} 

Again, let us remark that the corresponding statement for $k=1$ is well known, as it is equivalent to the greedy coloring of permutation graphs, accomplished by patience sorting.

\section{Main result} 
\label{sec:main}

Our main result proves a $k$-ary extension of Proposition~\ref{vc}. To state it, we extend the definition of split graphs to all values $k\geq 1$ as follows: 

\begin{definition} Given poset $Q=(U,\leq)$ an integer $k\geq 1$, the 
{\it $k$-split graph associated to poset $Q$} is  the bipartite graph $G_{Q,k}=(V_{1},V_{2},E)$, where 
\begin{itemize} 
\item $V_{1}=\{x^{-}_{1},\ldots, x^{-}_{k} :x\in U\}$ 
\item $V_{2}=\{y^{+}:y\in U\}$ 
\item given $x,y\in U,$ $x < y$, add to $E$ edge $x^{-}_{i}y^{+}$ for $i \in 1, ..., k$. 
\end{itemize} 
\end{definition} 

Given this definition, our main result shows that computing the $k$-width of a finite poset can be computed as in  Fulkerson's proof of Dilworth's theorem, by directly generalizing Proposition~\ref{vc}: 

\begin{theorem} 
Let $Q=(U,\leq)$ be a finite poset with $n$ elements and a fixed integer $k\geq 1$.  Then 
\begin{equation}
k\mbox{-wd(Q)} = n- \mbox{vc($G_{Q,k}$)}.
\end{equation}
\label{main:thm}
\end{theorem}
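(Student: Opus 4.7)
The plan is to mimic Fulkerson's proof of Dilworth's theorem (as summarized before Proposition~\ref{vc}), with the $k$-split graph $G_{Q,k}$ playing the role that $G_{Q}$ plays in the $k=1$ case. The key bookkeeping observation is that a partition of $Q$ into $k$-ary chains is essentially a spanning forest whose trees are $k$-ary rooted trees compatible with $\leq$, and the number of trees in such a forest equals $n$ minus the number of parent-child edges. So my goal is to put the parent-child edges of such a forest in bijection with the edges of a matching in $G_{Q,k}$.

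First I would set up the forward direction: given a partition of $U$ into $k$-ary chains realized by rooted trees $T_{1},\ldots,T_{c}$, I would construct a matching $M$ in $G_{Q,k}$ as follows. For each tree $T_{r}$ and each parent-child edge $(x,y)$ in $T_{r}$ (so $x \leq y$ in $Q$), select a distinct index $s \in \{1,\ldots,k\}$ among the (at most $k$) children of $x$ in $T_{r}$, and put $x_{s}^{-}y^{+}$ into $M$. Because each node has at most one parent, each vertex $y^{+}\in V_{2}$ is covered at most once; because each $x$ has at most $k$ children and we assign them distinct superscripts, each $x_{s}^{-}\in V_{1}$ is covered at most once. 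Thus $M$ is a matching with exactly $n-c$ edges.

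Second, for the reverse direction, starting from any matching $M$ in $G_{Q,k}$, I would define a parent function on $U$ by setting $\mathrm{parent}(y)=x$ whenever $x_{s}^{-}y^{+}\in M$ for some $s$. The matching property ensures $\mathrm{parent}(y)$ is well-defined (uniqueness on the $V_{2}$ side) and that each $x$ has at most $k$ children (at most $k$ of its copies $x_{1}^{-},\ldots,x_{k}^{-}$ are saturated). The resulting digraph is acyclic: following the parent function strictly decreases in the order $\leq$, because every edge of $G_{Q,k}$ goes from $x_{s}^{-}$ to $y^{+}$ only when $x \leq y$. Hence this parent function induces a forest of $k$-ary rooted trees whose edges respect $\leq$, i.e.\ a partition of $U$ into $k$-ary chains whose number of classes equals $n - |M|$, namely the number of roots.

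Combining both directions,
\begin{equation*}
k\text{-wd}(Q) \;=\; n - \max\{|M| : M \text{ matching of } G_{Q,k}\}.
\end{equation*}
Finally, applying the K\H{o}nig-Egerv\'ary theorem to the bipartite graph $G_{Q,k}$ replaces the maximum matching size by $vc(G_{Q,k})$, which yields the claim. The only mildly non-trivial step is verifying that the matching-to-forest map produces a genuine partition into $k$-ary chains, i.e.\ an acyclic digraph with in-degree at most $1$ and out-degree at most $k$; this is exactly where the $k$ copies $x_{1}^{-},\ldots,x_{k}^{-}$ in the definition of $G_{Q,k}$ are designed to help, so it should go through cleanly.
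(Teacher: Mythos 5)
Your proposal is correct and takes essentially the same approach as the paper: a size-preserving correspondence between partitions of $Q$ into $k$-ary chains (viewed as forests of $k$-ary trees, with $n$ minus the number of parent-child edges counting the classes) and matchings of $G_{Q,k}$, followed by an application of the K\H{o}nig-Egerv\'ary theorem to $G_{Q,k}$. The only difference is presentational: the paper first encodes partitions as ``left $k$-matchings'' of the ordinary split graph $G_{Q}$ and then splits each $x^{-}$ into $k$ copies to get a matching of $G_{Q,k}$, whereas you construct the matching of $G_{Q,k}$ directly from the parent-child edges, with the acyclicity of the parent function guaranteed by the order relation.
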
 
\begin{proof}
Define a {\em left $k$-matching} of graph $G_{Q}$ to be any set of edges $A\subseteq E$ such that for every $x,y\in U$, $deg_{A}(x^{-})\leq k$ and $deg_{A}(y^{+})\leq 1$. 

\begin{claim}\label{partition_of_Q}
Partitions of $Q$ into $k$-ary chains bijectively correspond to left $k$-matchings of $G_{Q}$. The number of classes of a partition is equal to $n$ minus the number of edges in the associated left $k$-matching.  
\end{claim}
\begin{proof} 
Consider a left $k$-matching $A$ in $G_{Q}$. Define the partition $P_{A}$ as follows: roots of the $k$-ary chains consist of those $x\in U$ for which $deg_{A}(x^{+})=0$. There must be some element $x\in U$ satisfying this condition, as the minimal elements of $Q$ with respect to $\leq$ satisfy this condition. 

Now we recursively add elements of $U$ to the partition $P_{A}$ (in parallel) as follows: 
\begin{enumerate}
\item All elements $y\in U$ not yet added to any $k$-ary chain, and such that $y^+$ is connected to some $x^{-}$ by an edge in $A$ are added to the $k$-ary chain containing $x$, as direct descendants of $x$. Note that element $x$ (if there exists at least one $y$ with this property) is unique (since $deg_{A}(y^{+})=1$ in this case), so the specification of the $k$-ary chain to add $y$ to is well defined. On the other hand, each operation adds at most $k$ successors of any $x$ to its $k$-ary chain, since $deg_{A}(x^{-})\leq k$. 
\item If all direct predecessors of an element $x\in U$ have been added to some $k$-ary chain and are no longer leaves of that $k$-ary chain then $x$ will be the root of a new $k$-ary chain. 
\end{enumerate} 
Conversely, given any partition $P$ of $U$ into $k$-ary chains, define set of edges $A$ consisting of edges $x^{-},y^{+}$ such that $x$ is the parent of $y$ in a $k$-ary chain. It is immediate that $A$ is a left $k$-matching. 
\end{proof} 

\begin{corollary} \label{mapping_of_partition} There is a bijective mapping between 
partitions of $Q$ into $k$-ary chains and matchings of $G_{Q,k}$ such that the number of $k$-ary chains in a partition is $n$ minus the number of edges in the matching. 
\end{corollary}
\begin{proof}
We will actually show (using Claim~\ref{partition_of_Q}) how to associate left $k$-matchings of $G_{Q}$ to matchings of $G_{Q,k}$. The idea is simple: given a node $x^{-}$ of $G_{Q}$ with $l\leq k$ neighbors in $V_{2}$, construct a matching in $G_{Q,k}$ by giving each of $x^{-}_{1},\ldots, x^{-}_{l}$ exactly one neighbor from the neighbors of $x^{-}$ (in a pairwise distinct way). In the other direction, if $e=x_i^{-} y^{+}$ is an edge in the matching of $G_{Q,k}$ then consider the appropriate edge $x^{-}y^{+}$  in $G_Q.$ It is easy to check that it gives a left $k$-matching in $G_Q$ (which corresponds to a $k-$chain partition of  $Q).$ Furthermore, the number of $k-$chains is the number of edges in the left $k-$matching in $G_Q$ which is the same as $n$ minus $vc(G_Q).$ 
\end{proof}

We complete the proof of Theorem \ref{main:thm} (based on Claim \ref{partition_of_Q} and Corollary  \ref{mapping_of_partition})  by applying in a straightforward way  K\H{o}nig's theorem to the graph $G_{Q,k}$. 
\end{proof}

\begin{corollary} One can compute parameter $k$-wd(Q) by creating a flow network $Z_Q$ and computing the value of the maximum flow of $Z_Q$ consisting of :
\begin{itemize} 
\item vertices and edges of $G_{Q}$, with edge  capacity 1.
\item a source $s$, connected to nodes in $V_{1}$ by directed edges of capacity $k$, 
\item a sink $t$, that all nodes in $V_{2}$ connect to via oriented edges of capacity $1$. 
\end{itemize} 
computing the  maximum $s$-$t$ network flow value $f$ in network $Z_Q$ and outputting $k$-wd(Q)=n-f. 
\end{corollary}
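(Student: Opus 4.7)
The plan is to reduce the computation of $k$-wd($Q$) to a maximum matching problem, which we can in turn formulate as a maximum $s$--$t$ flow in $Z_Q$. Since Theorem~\ref{main:thm} already equates $k$-wd($Q$) to $n - \mbox{vc}(G_{Q,k})$ and, by the K\H{o}nig--Egerv\'ary theorem, $\mbox{vc}(G_{Q,k})$ equals the size of a maximum matching in $G_{Q,k}$, it suffices to argue that the maximum flow value $f$ in $Z_Q$ equals the size of a maximum left $k$-matching in $G_Q$ (equivalently, a maximum matching in $G_{Q,k}$, via the bijection in Corollary~\ref{mapping_of_partition}).

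First I would invoke the integrality theorem for max flow: since all capacities of $Z_Q$ are integers, there exists a maximum flow that is integer-valued on every edge, with values in $\{0,1,\ldots,k\}$ on the edges leaving $s$ and in $\{0,1\}$ on all remaining edges. Next I would exhibit the correspondence between integer $s$--$t$ flows in $Z_Q$ and left $k$-matchings of $G_Q$. Given an integer flow of value $f$, let $A$ be the set of edges of $G_Q$ carrying one unit of flow. Flow conservation together with the capacity $k$ on each edge $sx^-$ yields $\deg_A(x^-) \leq k$, and flow conservation together with the capacity $1$ on each edge $y^+t$ yields $\deg_A(y^+) \leq 1$; hence $A$ is a left $k$-matching of $G_Q$ with $|A| = f$. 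Conversely, any left $k$-matching $A$ of $G_Q$ induces an integer flow of value $|A|$ by routing one unit of flow along $s \to x^- \to y^+ \to t$ for every edge $x^-y^+ \in A$; capacity constraints are satisfied precisely because of the degree bounds defining a left $k$-matching.

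This bijection shows that the maximum flow value $f$ in $Z_Q$ equals the maximum number of edges in a left $k$-matching of $G_Q$. Combining this with Claim~\ref{partition_of_Q} (which says that partitions of $Q$ into $k$-ary chains correspond to left $k$-matchings with the number of parts equal to $n$ minus the number of matched edges) gives
\[
k\mbox{-wd}(Q) \;=\; \min_{\text{partitions}} (\#\text{classes}) \;=\; n - \max_{A}|A| \;=\; n - f,
\]
which is the desired identity.

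The main obstacle is essentially bookkeeping: making precise that the integer max flow, the maximum left $k$-matching in $G_Q$, and the maximum matching in $G_{Q,k}$ all coincide in cardinality. Since each of these translations was already set up in the proof of Theorem~\ref{main:thm} and in Corollary~\ref{mapping_of_partition}, the real content is merely observing that the flow network $Z_Q$ encodes exactly the degree constraints of a left $k$-matching (capacity $k$ on the left, capacity $1$ on the right), after which integrality of max flow and the previously established bijections close the argument.
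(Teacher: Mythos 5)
Your argument is correct and follows the same route as the paper, which disposes of this corollary in one sentence by noting that $Z_Q$ is just the standard flow formulation of the maximum left $k$-matching problem in $G_Q$; your write-up merely supplies the details (integrality of max flow, the flow $\leftrightarrow$ left $k$-matching correspondence, and the appeal to Claim~\ref{partition_of_Q}) that the paper leaves implicit. No gaps.
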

\begin{proof}
Straightforward, this is simply the maximal flow algorithm for computing the maximal size left $k$-matching in $G_Q,$ similar to the construction for maximum matchings in bipartite graphs in the literature. 
\end{proof}
\section{Heapability of sequences of intervals: an online algorithm}
\label{sec:interval}

We now know that the problem of computing a minimal partition of the elements of a poset into heapable subsequences has a polynomial time algorithm. 

On the other hand for permutations (sequences of integers) the optimal algorithm presented in \cite{istrate2015heapable}, which extended the well-known
{\it patience sorting algorithm}  (\cite{mallows1963patience}), had a simple, online, structure: the heaps were built incrementally, by considering the elements one by one and inserting each element as a leaf in one of the existing heaps, or starting a new heap. In contrast, the particularization of the network flow algorithm to permutations is  {\bf not}  online in any natural sense, as it computes a maximum flow in a graph which depends "globally" on the sequence. One could ask whether the existence of the online algorithm from \cite{istrate2015heapable} is an exception or, rather, such online algorithms which are optimal exist for sequences of elements from other posets as well. 

The question is also interesting since the case $k=1$ is a natural variant of the \emph{online chain partitioning problem}, a problem with a rich history (\cite{kierstead1984theory,trotter1995partially}) and bibliography (see e.g. \cite{bosek2012line}).  In contrast to the classical case, in our variant elements can be assigned to a ($k$-ary) chain only if they can be inserted \emph{as leaves} at the moment they are inserted.  

In the sequel we provide an affirmative answer to the above question, by focusing on the case of sequences of intervals:  

\begin{theorem}
	For every fixed $k \geq 1$ there exists a (polynomial time) online algorithm that, given a sequence of intervals $S = ( I_1, I_2, ..., I_n )$ as input, computes a minimal partition of $S$ into $k$-ary chains so that each interval is inserted at the time of its consideration as a leaf into one of the $k$-ary chains, or starts a new $k$-ary chain. 
	\label{intervals}
\end{theorem}

Before proceeding with the proof of theorem~\ref{intervals}, let us remark a potential application of a variant of this result to parallel computing: many algorithms in this area (e.g. algorithms using a {\em parallel prefix-sum} design methodology, \cite{BlellochTR90}) require the computation of all prefixes of an associative operation $A_{1}*A_{2}*\ldots A_{n}$. Operations 
 being performed (each corresponding to one computation of a *-product) are arranged on a binary tree. In the (completely equivalent) max-heap variant of theorem~\ref{intervals}, children intervals are required to be less or equal to the parent interval with respect to ordering $\leq$. This is quite natural from the standpoint of parallel computing: consider the setting of a parallel-prefix problem where each intermediate *-computation is a rather costly operation; intervals now represent times when these operations can be scheduled. The requirement that the parent interval be larger than child intervals with respect to $\leq$ is completely natural, as child computations need to complete in order to feed their results to the parent computation. Thus our result answers the question whether all the time intervals can be scheduled on a single heap-ordered binary tree, and gives such a scheduling, if the answer is affirmative. 
\subsection{Proof of Theorem \ref{intervals}}
\label{sec:heap-proof}

%Without loss of generality, we will present the proof of the theorem for $k=2$, but the proof is quite similar for all $k$'s. 
The proof  employs the concept of {\em slots}, adapted for interval sequences  from similar concepts for permutations (\cite{byers2011heapable,istrate2015heapable}): 

\begin{definition}
When a new interval is added to a $k$-ary chain it opens k new positions to possibly insert other intervals as direct successors into this node. Each position has an associated integer value that will be called its {\em slot}. The value of all empty slots created by inserting $I_1=[a_1,b_1]$ into a $k$-ary chain will be $b_1$, the right endpoint of $I_1$. 
\end{definition}

\begin{definition}
An interval $I$ is {\it compatible with an (empty) slot with value $x$} if all of $I$ lies in $[x,\infty).$
\end{definition}

Intuitively, $x$ is the smallest value of the left endpoint of an interval that can be inserted in the $k$-ary chain as a child of an interval $I_{1}$ with right endpoint $x$ while respecting the heap property.  Indeed, as $k$-ary chains are (min-)heap ordered, an insertion of an interval $I$ into a $k$-ary chain as a child of $I_1$ is legal if the interval $I$ is greater than $I_{1}$ with respect to $\leq$ relation. This readily translates to the stated condition, that the start point  of $I$ must be greater or equal than the slot value of its parent.

\iffalse
\begin{center}
	\fbox{
	    \parbox{11cm}{
		        \textbf{Input:} A sequence of intervals $S=(I_{1},I_{2},\ldots, I_{n})$.\\
				\textbf{Output:} A partition $H$ of $S$ into $k$-ary chains. \\
		
			\textbf{FOR} $l$ in $1:n$:
			\begin{itemize}
			\item[] \textbf{LET} $I_{l}=[a_{l},b_{l}] $
				\item[] \textbf{IF} $I_{l}$ can be inserted into some empty slot   							
				\begin{itemize}
					\item[] \textbf{THEN} insert $I_l$ in the highest-valued compatible empty slot.				
					\item[] \textbf{ELSE} create new $k$-chain rooted at $I_l$ 
				\end{itemize}
			\end{itemize}
	    }
	}
\end{center}
\fi

\begin{figure}[ht]
\begin{center}
	\fbox{
	    \parbox{11cm}{
		        \textbf{Input:} A sequence of intervals $I=(I_{1},I_{2},\ldots, I_{n})$.\\
				\textbf{Output:} A partition $H$ of $I$ into $k$-ary chains. \\
		
			\textbf{for} $i:=1$ to $n$ do:
			\begin{itemize}
				\item[] \textbf{if} $I_{i}=[l_{i},r_{i}] $ can be inserted into some empty slot   							
				\begin{itemize}
					\item[] \textbf{then} insert $I_i$ in the highest-valued compatible slot (a child of the node with this slot).				
					\item[] \textbf{else} create a new $k$-chain rooted at $I_i$ 
				\end{itemize}
			\end{itemize}
	    }
	}
\end{center}
\caption{The best-fit algorithm for $k$-ary chain partition of sequences of intervals.} 
\label{alg:greedy}
\end{figure} 

\begin{figure}[!ht]
%\begin{table}%{h!}
\resizebox{1.\linewidth}{!}{
	\begin{tabular}{cc}
		\begin{tikzpicture}[->,>=stealth',shorten >=1pt,auto,node distance=0.55cm,
		thick,main node/.style={rectangle,draw,font=\sffamily\small\bfseries}]
		
		\node[label=90:$H_1$][main node] (1) {$[1, 7]$};
		\node[main node] (2) [below left = of 1] {$[7, 9]$};
		\node[main node] (3) [below right = of 1] {$[8, 16]$};
		\node[main node] (4) [below left = of 2] {};
		\node[main node] (5) [below right = of 2] {};
		\node[main node] (6) [below left = of 3] {};
		\node[main node] (7) [below right = of 3] {};
		
		\path[every node/.style={font=\sffamily\small}]
		(1) edge node [left, pos=0.2] {7} (2)
		edge node[right, pos=0.2] {7} (3)
		(2) edge node [left, pos=0.2] {9} (4)
		edge node [right, pos=0.2] {9} (5)
		(3) edge node [left, pos=0.2] {16} (6)
		edge node [right, pos=0.2] {16} (7);
		
		\end{tikzpicture} &
		\hspace{-1cm}\begin{tikzpicture}[->,>=stealth',shorten >=1pt,auto,node distance=0.75cm,
		thick,main node/.style={rectangle,draw,font=\sffamily\small\bfseries}]
		
		\node[label=90:$H_3$] [main node] (1) {$[1, 2]$};
		\node[main node] (2) [below left = of 1] {$[3, 19]$};
		\node[main node] (3) [below right = of 1] {$[5, 7]$};
		\node[main node] (4) [below left = of 2] {};
		\node[main node] (5) [below right = of 2] {};
		\node[main node] (6) [below left = of 3] {};
		\node[main node] (7) [below right = of 3] {};
		
		\path[every node/.style={font=\sffamily\small}]
		(1) edge node [left, pos=0.2] {2} (2)
		edge node[right, pos=0.2] {2} (3)
		(2) edge node [left, pos=0.2] {19} (4)
		edge node [right, pos=0.2] {19} (5)
		(3) edge node [left, pos=0.2] {7} (6)
		edge node [right, pos=0.2] {7} (7);
		
		\end{tikzpicture} \\

	\hspace{2cm}
	\begin{tikzpicture}[->,>=stealth',shorten >=1pt,auto,node distance=0.75cm,
	thick,main node/.style={rectangle,draw,font=\sffamily\small\bfseries}]
	
	\node[label=90:$H_2$] [main node] (1) {$[1, 11]$};
	\node[main node] (2) [below left = of 1] {$[11, 12]$};
	\node[main node] (3) [below right = of 1] {};
	\node[main node] (4) [below left = of 2] {$[15, 16]$};
	\node[main node] (5) [below right = of 2] {$[13, 17]$};
	\node[main node] (6) [below left = of 4] {};
	\node[main node] (7) [below right = of 4] {};
	\node[main node] (8) [below left = of 5] {};
	\node[main node] (9) [below right = of 5] {};
	
	\path[every node/.style={font=\sffamily\small}]
	(1) edge node [left, pos=0.2] {11} (2)
	edge node[right, pos=0.2] {11} (3)
	(2) edge node [left, pos=0.2] {12} (4)
	edge node [right, pos=0.2] {12} (5)
	(5) edge node [left, pos=0.2]{17} (8)
	edge node [right, pos=0.2]{17} (9)
	(4) edge node [left, pos=0.2] {16} (6)
	edge node [right, pos=0.2] {16} (7);
	
	\end{tikzpicture}
	& \\
	\end{tabular}
%\end{table}
}
	\caption{The binary (2-ary)-chain configuration corresponding to $S_1$.}
	\label{fig:17}
\end{figure}

The proposed greedy best-fit algorithm for computing a  minimum partition into $k$-chains a sequence of $n$ intervals  is described in Figure~\ref{alg:greedy}. As an example, consider the sequence of intervals $S_1$ below, with $k=2$. The resulting  configuration is shown in Figure \ref{fig:17}.

\[S_1=\left([1, 7], [1, 11], [11, 12], [15, 16], [7, 9], [8, 16], [1, 2], [3, 19], [13, 17], [5, 7]\right)\]

By choosing the highest valued slot available for insertion for $I_t=[a_t,b_t]$, we make sure that the difference between the chosen slot value $s$ and $a_t$ is minimal. This is desirable because there may be some interval further down the sequence with starting  point value in between the $s$ and $a_t$ that fits slot $s$ but cannot be inserted there, as the slot is no longer available. 

We define the concepts of signature of a multiset of slots and domination between such multisets in a similar way to the corresponding concepts for permutations in \cite{byers2011heapable}: 
\begin{definition}
	Given a multiset of slots $H,$ we call the vector of slots, sorted in increasing order, the \emph{signature} of $H$. We shall denote this by $sig(H)$.	 By slightly abusing notation, we will employ the previous definition in the obvious way when $H$ is a union of $k$-chains as well. 
\end{definition}
\begin{definition}
	Multiset  $P$ {\it dominates multiset $Q$} (denoted $P \preceq Q$) if \\ $|sig(P)|\leq |sig(Q)|$ and for all $1\leq i \leq |sig(P)|$ we have $sig(P)[i] \leq sig(Q)[i]$. 
\end{definition}
 
For example, for the binary ($2$-ary) chains in the Figure \ref{fig:17} their corresponding signatures are, respectively:

	\begin{itemize}
		\item[] $sig(H_1) = [9, 9, 16, 16]$;
		\item[] $sig(H_2) = [11, 16, 16, 17, 17]$;
		\item[] $sig(H_3) = [7, 7, 19, 19]$.
	\end{itemize}

Therefore, in our example $H_1 \preceq H_2$, ($H_1$ dominates $H_2$), but no other domination relations between $H_{1},H_2,H_3$ are true. 

\begin{lemma}
	Assume that $A,B$ are multisets of slots and $A$ dominates $B$. Then the multisets $A'$ and $B'$ obtained after inserting a new interval  into the largest compatible slot of $A$, and into an arbitrary compatible slot of $B$, propagates the domination property, i.e. $A'$ dominates $B'$.
	\label{lemma:1}
\end{lemma}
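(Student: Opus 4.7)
The plan is to translate the domination relation into a statement about \emph{tail counts}, which interact cleanly with the insertion operation. Let $M_X(t) := |\{x\in X : x>t\}|$ denote the number of slots of $X$ strictly above the threshold $t$; the relevant formulation of ``$A\preceq B$'' for this lemma is $M_A(t)\le M_B(t)$ for every real $t$ (equivalently, padding $A$ at the bottom until it matches $|B|$ makes it pointwise $\le B$). In this form the update rule is transparent: inserting $I=[a,b]$ into a slot $s\le a$ turns $M_X(t)$ into $M_{X'}(t) = M_X(t) - \mathbf{1}[s>t] + k\cdot\mathbf{1}[b>t]$, and since the same interval is inserted into both $A$ and $B$ the $k\cdot\mathbf{1}[b>t]$ contribution is common to the two updates and cancels in the $A'$-vs-$B'$ comparison.

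After this cancellation, the target inequality $M_{A'}(t)\le M_{B'}(t)$ reduces to
\[
\bigl(M_A(t)-M_B(t)\bigr) + \bigl(\mathbf{1}[s_B>t]-\mathbf{1}[s_A>t]\bigr) \le 0,
\]
where $s_A$ is the greedy's largest compatible slot in $A$ and $s_B$ is the adversary's compatible slot in $B$. The first bracket is $\le 0$ by hypothesis; whenever it is $\le -1$, the integer-valued indicator difference (always in $\{-1,0,1\}$) cannot destroy the inequality. The only delicate case is the exact equality $M_A(t)=M_B(t)$, in which I must show $\mathbf{1}[s_A>t]\ge\mathbf{1}[s_B>t]$, i.e.\ whenever the adversary picks $s_B\in(t,a]$, the greedy's choice $s_A$ also lies in $(t,a]$.

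For this knife-edge case the plan is to combine the equality $M_A(t)=M_B(t)$ with the separate instance $M_A(a)\le M_B(a)$ of the hypothesis, which gives
\[
|A\cap(t,a]|-|B\cap(t,a]| \;=\; M_B(a)-M_A(a) \;\ge\; 0.
\]
Since $s_B$ witnesses $|B\cap(t,a]|\ge 1$, the window $A\cap(t,a]$ is also non-empty; by the greedy's ``largest compatible'' rule $s_A$ dominates every element of $A$ that is $\le a$, so in particular $s_A$ dominates any element of $A\cap(t,a]$, giving $s_A>t$. The main obstacle is precisely this pigeonhole step in the window $(t,a]$: it uses two distinct instances of the domination hypothesis together with the specific largest-compatible choice made by the greedy --- a variant greedy (e.g.\ smallest compatible) would fail this implication, which is exactly why the algorithm is defined to pick the largest compatible slot. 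The remaining steps --- handling the boundary regimes $t\ge a$ and $t\ge b$ where the indicators degenerate, extending from $k=2$ to general $k$ (only the cancelling term $k\cdot\mathbf{1}[b>t]$ rescales), and translating the tail-count conclusion back into the signature-indexed wording of the lemma --- are routine bookkeeping.
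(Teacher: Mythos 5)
Your proof rests on the claim that the paper's domination relation is equivalent to the tail-count condition $M_A(t)\le M_B(t)$ for all $t$ (``padding $A$ at the bottom until it matches $|B|$''), and this claim is false. The paper's $A\preceq B$ compares signatures sorted in increasing order index-by-index \emph{from the bottom}: $|sig(A)|\le|sig(B)|$ and $sig(A)[i]\le sig(B)[i]$ for $1\le i\le|sig(A)|$, i.e.\ the $i$-th \emph{smallest} slot of $A$ is at most the $i$-th smallest slot of $B$. Your tail-count condition is the top-aligned comparison ($r$-th largest vs.\ $r$-th largest), which is strictly weaker whenever $|A|<|B|$: take $A=\{10\}$, $B=\{1,20\}$; then $M_A(t)\le M_B(t)$ for every $t$, yet $sig(A)[1]=10>1=sig(B)[1]$, so $A$ does not dominate $B$. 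Unequal cardinalities are not a corner case here: the greedy forest and the adversary/optimal forest generally open different numbers of new roots, so their slot multisets have different sizes, and your induction therefore maintains an invariant weaker than the one the lemma asserts. The loss is fatal for the way the lemma is used: the optimality argument (Lemma~\ref{bar}) needs precisely the bottom comparison $sig(\Gamma_{r-1})[1]\le sig(\Omega_{r-1})[1]$ to conclude that when the greedy must start a new $k$-ary chain, so must the optimal; in the counterexample above, an arriving interval $[5,\cdot]$ forces a new chain on $A$ but not on $B$, even though your tail-count relation holds. Relatedly, you never treat the case in which no compatible slot exists (insertion then adds $k$ slots and removes none), which the paper's proof covers, and which is exactly where the bottom-aligned hypothesis is invoked ($a_1>x$ and $A\preceq B$ give $b_1\ge a_1>x$); within your top-aligned framework the implication ``$A$ has no compatible slot $\Rightarrow$ $B$ has no compatible slot'' simply fails.

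The mechanics of your argument are otherwise sound: the update rule for the counts, the cancellation of the common $k\cdot\mathbf{1}[b>t]$ term, and the pigeonhole step in the window $(t,a]$ using the largest-compatible choice are all correct, and when $|A|=|B|$ the tail-count condition does coincide with domination, in which case your proof is a legitimate alternative to the paper's four-case signature analysis. To repair it you must work with an invariant equivalent to the paper's definition — for instance the lower-count form $|\{s\in A: s\le t\}|\ge\min\bigl(|A|,\,|\{s\in B: s\le t\}|\bigr)$ for all $t$, together with $|A|\le|B|$ — and redo the knife-edge and new-chain cases with that (bottom-aligned) relation, or argue directly on the increasingly sorted signatures as the paper does.
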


\begin{proof} 

Let $sig(A)=[a_{1},a_{2},\ldots, a_{|sig(A)|}]$ and $sig(B)=[b_{1},b_{2},\ldots, b_{|sig(B)|}].$ Also, by convention, define $a_{0}=b_{0}=-\infty$ and $a_{|sig(A)|+1}=b_{|sig(B)|+1}=+\infty$. 

Proving that $A' \preceq B'$ is equivalent to proving that $|sig(A')|\leq |sig(B')|$ and for all indices $1 \leq l \leq |sig(A')|$:
\begin{equation}
	sig(A')[l] \leq sig(B')[l].
\end{equation}
The cardinality condition can be easily verified: indeed, $|sig(A')|$ is either $|sig(A)|+k-1$ (if the process adds $k$ copies of $y$ but also kills a lifeline) or $|sig(A)|+k$ (no slot exists with a value less or equal to $x$), and similarly for $|sig(B')|.$    It follows easily from the domination property that when $|sig(A')|=|sig(A)|+k$ then $|sig(B')|=|sig(B)|+k$ as well. Indeed, since $|sig(A')|=|sig(A)|+k$, no slot of $A$ has value lower than $x$ (or it would lose a lifeline). Thus $a_{1}>x$ and since $A\preceq B$, 
$b_{1}\geq a_{1}>x$. So no slot of $B$ is smaller than $x$ either. 

As for the second condition, consider the slots from $A$ and $B$ which interfere with the newly arrived interval as follows: 

\begin{itemize}
	\item Let  $i$ and $i'$ be the (unique) indices such that 
	$a_i \leq x < a_{i+1}$ and $a_{i'} \leq y < a_{i'+1} $ hold in $A,$ with $i,i' \in \{0,\cdots, |sig(A)|\}.$
	\item Similarly, let $j$ and $j'$ be the unique indices such that 
	$b_j \leq x < b_{j+1}$ and $b_{j'} \leq y < b_{j'+1}$ hold in $B,$ with $j,j' \in \{0,\cdots, |sig(B)|\}.$
\end{itemize}

\begin{figure} 
\resizebox{.99\linewidth}{!}{
\begin{picture}(430, 170)
	\linethickness{0.523cm}
	\color{pastelred}
	\put(279, 151){\line(0,-1){12}}

	\linethickness{0.51cm}
	\color{pastelred}
	\put(78, 116){\line(0,-1){12}}
	
	\linethickness{1.03cm}
	\color{pistachio}
	\put(342, 71){\line(0,-1){12}}
	
	\linethickness{1.03cm}
	\color{pistachio}
	\put(187, 36){\line(0,-1){12}}

	\color{black}
	\linethickness{0.02cm}

	%Hn
	
	\put(-35, 140) {$sig(A)$}
	
	\put(0, 151) {\line(1,0){400}}
	\put(0, 139) {\line(1,0){400}}
	\put(0, 151) {\line(0,-1){12}}
	\put(400, 151) {\line(0,-1){12}}
	
	\put(264, 142) {$a_i$}
	\put(260, 151) {\line(0,-1){12}}
	\put(275, 151) {\line(0,-1){12}}
	
	\put(326, 142) {$a_{i'}$}
	\put(323, 151) {\line(0,-1){12}}
	\put(337, 151) {\line(0,-1){12}}

	%Hp
	
	\put(-35, 105){$sig(B)$}
	
	\put(0, 116) {\line(1,0){400}}
	\put(0, 104) {\line(1,0){400}}
	\put(0, 116) {\line(0,-1){12}}
	\put(400, 116) {\line(0,-1){12}}

	\put(66, 106) {$b_t$}
	\put(63, 116) {\line(0,-1){12}}
	\put(77, 116) {\line(0,-1){12}}

	\put(124, 106) {$b_j$}
	\put(120, 116) {\line(0,-1){12}}
	\put(135, 116) {\line(0,-1){12}}
	
	\put(176, 106) {$b_{j'}$}
	\put(173, 116) {\line(0,-1){12}}
	\put(187, 116) {\line(0,-1){12}}

	\put(68, 165) {$t$}
	
	\put(70, 155) {\line(0,-1){10}}
	\put(70, 140) {\line(0,-1){10}}
	\put(70, 125) {\line(0,-1){10}}
	\put(70, 110) {\line(0,-1){10}}
	\put(70, 95) {\line(0,-1){10}}
	\put(70, 80) {\line(0,-1){10}}
	\put(70, 65) {\line(0,-1){10}}
	\put(70, 50) {\line(0,-1){10}}
	%\put(70, 35) {\line(0,-1){10}}
	\put(70, 20) {\line(0,-1){10}}

	\put(178, 165) {$j'$}
	
	\put(180, 155) {\line(0,-1){10}}
	\put(180, 140) {\line(0,-1){10}}
	\put(180, 125) {\line(0,-1){10}}
	\put(180, 110) {\line(0,-1){10}}
	\put(180, 95) {\line(0,-1){10}}
	\put(180, 80) {\line(0,-1){10}}
	\put(180, 65) {\line(0,-1){10}}
	\put(180, 50) {\line(0,-1){10}}
	%\put(180, 35) {\line(0,-1){10}}
	\put(180, 20) {\line(0,-1){10}}

	\put(328, 165) {$i'$}

	\put(330, 155) {\line(0,-1){10}}
	\put(330, 140) {\line(0,-1){10}}
	\put(330, 125) {\line(0,-1){10}}
	\put(330, 110) {\line(0,-1){10}}
	\put(330, 95) {\line(0,-1){10}}
	\put(330, 80) {\line(0,-1){10}}	
	\put(330, 65) {\line(0,-1){10}}
	\put(330, 50) {\line(0,-1){10}}
	\put(330, 35) {\line(0,-1){10}}
	\put(330, 20) {\line(0,-1){10}}

	%Hn'	
	
	\put(-35, 60) {$sig(A')$}
	
	\put(0, 71) {\line(1,0){400}}
	\put(0, 59) {\line(1,0){400}}
	\put(0, 71) {\line(0,-1){12}}
	\put(400, 71) {\line(0,-1){12}}

	\put(124, 62) {$a_j$}
	\put(120, 71) {\line(0,-1){12}}
	\put(135, 71) {\line(0,-1){12}}

	\put(264, 62) {$a_{i+1}$}
	\put(260, 71) {\line(0,-1){12}}
	\put(275, 71) {\line(0,-1){12}}

	\put(312, 62) {$a_{i'}$}
	\put(308, 71) {\line(0,-1){12}}

	\put(328, 62) {$y$}
	\put(323, 71) {\line(0,-1){12}}
	\put(337, 71) {\line(0,-1){12}}
	
	\put(343, 62) {$y$}
	\put(352, 71) {\line(0,-1){12}}
	
	\put(356, 62) {$a_{i'+1}$}
	\put(367, 71) {\line(0,-1){12}}

	%Hp'
	
	\put(-35, 25){$sig(B')$}
	
	\put(0, 36) {\line(1,0){400}}
	\put(0, 24) {\line(1,0){400}}
	\put(0, 36) {\line(0,-1){12}}
	\put(400, 36) {\line(0,-1){12}}

	\put(66, 26) {$b_{t+1}$}
	\put(63, 36) {\line(0,-1){12}}
	\put(77, 36) {\line(0,-1){12}}

	\put(109, 26) {$b_j$}
	\put(105, 36) {\line(0,-1){12}}

	\put(124, 26) {$b_{j+1}$}
	\put(120, 36) {\line(0,-1){12}}
	\put(135, 36) {\line(0,-1){12}}

	\put(161, 26) {$b_{j'}$}
	\put(158, 36) {\line(0,-1){12}}

	\put(178, 27) {$y$}
	\put(173, 36) {\line(0,-1){12}}
	\put(187, 36) {\line(0,-1){12}}		
	
	\put(193, 27) {$y$}
	\put(202, 36) {\line(0,-1){12}}
	
	\put(206, 26) {$b_{j'+1}$}
	\put(217, 36) {\line(0,-1){12}}
	
	\put(342, 26) {$b_{i'}$}
	\put(337, 36) {\line(0,-1){12}}
	\put(352, 36) {\line(0,-1){12}}
	
	\put(357, 26) {$b_{i'+1}$}
	\put(367, 36) {\line(0,-1){12}}

	% cases
	
	\put(30, 10) {\ding{192}}
	\put(120, 10) {\ding{193}}
	\put(255, 10) {\ding{194}}
	\put(365, 10) {\ding{195}}

\end{picture}
}
\caption{The various cases of inserting a new interval [x, y] into $B$ and $A$.} 
\label{fig:cases} 
\end{figure} 

Since $A \preceq B$, it follows that $i \geq j$ and $ i' \geq j'$.
Suppose that we insert the interval $[x, y]$ in $B$ in an arbitrary slot $b_t \leq b_j$ (thus removing one life of slot $b_t$ and inserting $k$ copies of slot $y$). The rest of the proof is by a case analysis. The four cases which can be distinguished (displayed in Fig.~\ref{fig:cases}, for $k=2$) are:
\begin{itemize}
	\item[Case 1.] $l < t$: \\
		In this case, none of the  signatures $A,B$ were affected at position $l$ by the insertion of $[x, y]$, hence:
		\begin{center}
			$ sig(A')[l] = sig(A)[l] \leq sig(B)[l] = sig(B')[l] $.
		\end{center}	
		
	\item[Case 2.] $l \in [t, j')$: \\
		In this range all slots from $B'$ have been shifted by one position to the left compared to $B$ due to the removal of $b_t$. Consequently:
		\begin{center}
			$ sig(A')[l] = sig(A)[l] \leq sig(B)[l+1] = sig(B')[l] $.
		\end{center}
		
	\item[Case 3.] $l \in [j', i')$: \\
		Knowing that $i'$ is the position in $A$ where we inserted $k$ new slots with value $y$, $j'$ is the position in $B$ where we inserted these same slots and $i' \geq j'$, the following is true:
		\begin{center}
			$ sig(A')[l] \leq sig(B)[j'] \leq y = sig(B')[i'] \leq sig(B')[l]$.
		\end{center}
	
	\item[Case 4.] $l \geq i'$: 
		\begin{itemize}
			\item[$a)$] For $ l = i', \ldots,  i'+k-1 $ : $ sig(A')[l] = sig(B')[j'] = y $. Since $ j' < i' < i'+1 $, then:
			\begin{center}
				$ sig(A')[i'] = y = sig(B')[j'] \leq sig(B')[i'] $.\\
				$ sig(A')[i' + 1] = y = sig(B')[j'] \leq sig(B')[i' + 1] $.
			\end{center} 
			
			\item[$b)$]	For $ l \geq i' + k $ the two signatures have equally shifted components compared to the original signatures of $A$ and $B$, so:
			\begin{center}
				$ sig(A')[l] = sig(A')[i' +k-1+ (l-i-k+1)]=sig(A)[i' +(l-i'-k+1)]  \leq sig(B)[l-k+1] = sig(B')[l]$
			\end{center}
			
		\end{itemize}
\end{itemize}

In conclusion, $ sig(A')[l] \leq sig(B')[l]$ for any $l$, and relation  $ A' \preceq B' $ follows. 
\end{proof}

\begin{lemma}
Given a sequence $I$ of intervals, consider an optimal way of partitioning $I$ into $k$-ary chains. Let $r$ be a stage where the greedy best-fit algorithm creates a new $k$-ary chain. Then the optimal way also creates a new $k$-ary chain. 
\label{bar}
\end{lemma}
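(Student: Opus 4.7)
The plan is to prove the lemma by induction on the processing stage, with Lemma~\ref{lemma:1} as the central engine. Fix an optimal partition $\mathcal{O}$ of $I$ into $k$-ary chains. For each $r\geq 0$, let $A_r$ denote the multiset of slots produced by the greedy best-fit algorithm after processing $I_1,\ldots,I_r$, and let $B_r$ denote the multiset of slots induced by $\mathcal{O}$ restricted to the first $r$ intervals (each interval having been placed in the chain prescribed by $\mathcal{O}$). The inductive invariant to maintain is $A_r\preceq B_r$, with base case $A_0=B_0=\emptyset$.

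For the inductive step, I process $I_r=[x_r,y_r]$ and compare the two moves. When both greedy and $\mathcal{O}$ attach $I_r$ as a child of an existing node, Lemma~\ref{lemma:1} applies verbatim: greedy chooses the highest compatible slot, $\mathcal{O}$ picks an arbitrary compatible one, and domination is propagated. When both open a new chain at $I_r$, each multiset simply gains $k$ copies of $y_r$, so $A_r\preceq B_r$ follows immediately from $A_{r-1}\preceq B_{r-1}$. The mixed cases can be absorbed into the same framework by treating ``open a new chain'' as an insertion into a fictitious slot of value $-\infty$ augmented to each multiset; one then re-invokes Lemma~\ref{lemma:1}.

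Granted the invariant, the lemma itself is a one-line consequence. At stage $r$ where greedy opens a new $k$-ary chain, no slot in $A_{r-1}$ is compatible with $I_r$, meaning every value in $\mathit{sig}(A_{r-1})$ strictly exceeds $x_r$. By the domination hypothesis $\mathit{sig}(B_{r-1})[i]\geq \mathit{sig}(A_{r-1})[i]$ for all valid $i$, so every slot of $B_{r-1}$ also strictly exceeds $x_r$. Equivalently, no previously processed interval in $\mathcal{O}$ has right endpoint $\leq x_r$ among its still-available slot copies, so $\mathcal{O}$ cannot have $I_r$ as a descendant of any earlier interval; hence $I_r$ is the root of a chain in $\mathcal{O}$, as required.

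The main obstacle I anticipate is the clean bookkeeping of the mixed cases in the inductive step, where one side opens a new chain while the other attaches $I_r$ to an existing slot. Concretely, I need to verify that the ``$-\infty$ slot'' trick does not interact adversely with greedy's rule of picking the \emph{highest} compatible slot (the fictitious slot, being minimal, is never greedy's choice when a real compatible slot exists, which is exactly what we want). Once this is in place, everything else is routine and the lemma drops out.
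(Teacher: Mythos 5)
Your route is the paper's own: maintain by induction, via Lemma~\ref{lemma:1}, that the greedy slot multiset dominates the one induced by a fixed optimal partition, then compare minimum slots. The trouble is exactly the ``mixed case'' you flag, and it is not routine bookkeeping: when the optimal partition leaves $I_r$ as a root even though a compatible slot is available (it may be reserving a small slot for a later interval), domination of the genuine slot multisets is simply not preserved. Take $k=1$, $A_{r-1}=B_{r-1}=\{1,5\}$ and $I_r=[3,10]$: greedy consumes slot $1$ and gets $\{5,10\}$, while the declining optimal gets $\{1,5,10\}$, and $5\not\le 1$. Your $-\infty$-slot device does preserve a padded form of domination (the optimal's extra small real slots are matched against fictitious $-\infty$ entries), but that padded invariant is too weak for your concluding step: when greedy opens a new chain you can no longer deduce that every \emph{real} slot on the optimal side exceeds $x_r$. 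Indeed, against an arbitrary optimal partition the stage-wise conclusion can fail outright: for $k=1$ and the sequence $[0,1],[0,5],[1,10],[3,20]$, the partition $\{[0,1]\to[3,20]\},\ \{[0,5]\},\ \{[1,10]\}$ is optimal (three chains, matching the antichain $\{[0,5],[1,10],[3,20]\}$), yet greedy opens a new chain at the fourth interval while this optimal partition attaches it to $[0,1]$.

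So the missing idea is the choice of the optimal solution one compares against: you must first argue (say, by an exchange argument in the spirit of Section~\ref{maxheap}) that some optimal partition can be realized by a ``never-decline'' insertion process, i.e.\ one that attaches each arriving interval to a compatible slot whenever one exists; for such a representative the mixed case disappears, Lemma~\ref{lemma:1} applies verbatim at every step, and your final paragraph is then correct. To be fair, the paper's own proof of Lemma~\ref{bar} is silent on this very point (Lemma~\ref{lemma:1} only treats insertions into compatible slots), so you have reproduced its argument faithfully and even identified its weak spot; but your claim that the fictitious-slot trick renders the remaining cases routine is not justified as written.
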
 
\begin{proof}
We use the fact that, by Lemma \ref{lemma:1}, before every step $r$ of the algorithm the multiset $\Gamma_{r-1}$ of slots created by our greedy algorithm dominates the multiset $\Omega_{r-1}$ created by the optimal insertion. 

Suppose that at stage $r$ the newly inserted interval $I_{r}$  causes a new $k$-ary chain to be created. That means that the left endpoint $l_{r}$ of $I_{r}$ is lower than any of the elements of the multiset $\Gamma_{r-1}$. By domination, the minimum slot of $\Omega_{r-1}$ is at least as high as the minimum slot of $\Gamma_{r-1}$. Therefore $l_{r}$ is also lower than any slot of $\Omega_{r-1}$, which means that the optimal algorithm also creates a new $k$-ary chain when inserting $I_{r}$. 
\end{proof}

Lemma \ref{bar} proves that the best-fit algorithm for insertion of new intervals is optimal. 

\subsection{The interval Hammersley (tree) process} 

One of the most fruitful avenues for the investigation of the scaling properties of the LIS (Longest increasing subsequence) of a random permutation is made via the study of the  (so-called {\it hydrodynamic}) limit behavior of an interacting particle system known as {\it Hammersley's process} ( \cite{aldous1995hammersley}). This is a stochastic process that, for the purposes of this paper can be defined (in a somewhat simplified form) as follows: random numbers $X_{0},X_{1}, \ldots, X_n,\ldots \in (0, 1)$ arrive at integer moments. Each value $X_{j}$ eliminates ("kills") the smallest $X_i > X_j$ that is still alive at moment $j$. Intuitively, "live" particles represent the top of the stacks in the  {\it patience sorting algorithm} that computes parameter LIS. 

The problem of partitioning a random permutation into a minimal set of $k$-heapable subsequences is similarly connected to a variant of the above process, introduced in \cite{istrate2015heapable} and further studied in \cite{basdevant2016hammersley,basdevant2017almost}, where it was baptized {\it Hammersley's tree process}. Now particles come with $k$ {\it lives}, and each particle $X_{j}$ merely takes one life of the smallest $X_{i}>X_{j}$, if any (instead of outright killing it). 

The proof of theorem \ref{intervals} shows that a similar connection holds for sequences of {\it intervals}. The {\em Hammersley interval tree process} is defined as follows: "particles" are still numbers in $(0, 1)$, that may have up to $k$ lives. However now the sequence $I_{0},I_{1},\ldots, I_{n},\ldots $ is comprised of random {\it intervals} in $(0, 1)$. When interval $I_{n}=[a_{n},b_{n}]$ arrives, it is $a_{n}$ that takes a life from the largest live particle $y \leq  a_{n}$. However, it is $b_{n}$ that is inserted as a new particle, initially with $k$ lives. 

\begin{corollary} 
Live particles in the above Hammersley interval process correspond to slots in our greedy insertion algorithm above. The newly created $k$-ary chains correspond to local minima (particle insertions that have a value lower than the value of any particle that is alive at that particular moment). 
\label{cor:1}
\end{corollary}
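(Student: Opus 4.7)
The plan is to set up an explicit bijection by induction on the number of intervals processed. For $n \geq 0$, let $\Gamma_n$ denote the multiset of slot values present in the greedy algorithm's forest after processing $I_1,\ldots,I_n$, and let $\Lambda_n$ denote the multiset of live-particle values in the Hammersley interval tree process after the same $n$ arrivals, where a particle with value $y$ and $m$ remaining lives contributes $m$ copies of $y$ to $\Lambda_n$. The central claim to establish is that $\Gamma_n = \Lambda_n$ for all $n$.

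First I would handle the base case $\Gamma_0 = \Lambda_0 = \emptyset$, then the inductive step. Assume $\Gamma_{n-1}=\Lambda_{n-1}$ and consider the arrival of $I_n=[a_n,b_n]$. In the greedy algorithm one of two things happens: either there exists a slot $s \in \Gamma_{n-1}$ with $s \leq a_n$, in which case one copy of the largest such $s$ is removed and $k$ copies of $b_n$ are inserted; or no such slot exists, in which case a new $k$-ary chain is created and $k$ copies of $b_n$ are inserted (with no removal). In the Hammersley interval tree process, $a_n$ removes one life of the largest live particle $y \leq a_n$ (if any such particle exists), and then $b_n$ is inserted with $k$ lives. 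Since $\Gamma_{n-1}=\Lambda_{n-1}$, the largest element of $\Gamma_{n-1}$ that is $\leq a_n$ coincides with the value of the largest live particle $\leq a_n$, and ``no compatible slot'' coincides with ``no live particle $\leq a_n$''. The multiset updates are therefore identical in the two processes, yielding $\Gamma_n=\Lambda_n$.

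For the second part of the statement, observe that a new $k$-ary chain is created by the greedy algorithm at stage $n$ precisely when no element of $\Gamma_{n-1}$ is $\leq a_n$. By the bijection just established, this is equivalent to every live particle in $\Lambda_{n-1}$ having value $> a_n$, which is exactly the condition that the arrival $I_n$ is a local minimum in the sense stated.

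I do not anticipate any substantive obstacle: the argument is essentially a one-step translation of definitions combined with induction. The only mild point of care concerns ties between $a_n$ and existing slot/particle values, but this is handled by interpreting the ``largest compatible slot'' and the ``largest live particle'' consistently (either both using $\leq$ or, in the continuous random model, observing that ties occur with probability zero, as is already implicit in the setup of Section~\ref{sec:prelim}).
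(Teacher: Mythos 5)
Your proposal is correct and matches the paper's (largely implicit) justification: the corollary is exactly the observation that the greedy algorithm's slot-multiset update upon processing $I_n=[a_n,b_n]$ (consume one copy of the largest compatible slot if one exists, add $k$ copies of $b_n$) coincides step by step with the Hammersley interval process update, which your induction on $n$ formalizes. Your remark about the $\leq$ versus $<$ tie issue is the only point of friction with the paper's wording of the process, and your resolution (consistent conventions, or measure-zero ties in the continuous model) is the right one.
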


\section{From sequences to sets of intervals}
\label{sec:sets}
Theorem~\ref{intervals} dealt with {\em sequences} of intervals. On the other hand a set of intervals does not come with any particular listing order on the constituent intervals. Nevertheless, the problem can be easily reduced to the sequence case by the following: 

\begin{theorem} 
Let $k\geq 1$ and $Q$ be a set of intervals. Then the $k$-width of $Q$ is equal to the $k$-width of $Gr_{Q}$, the sequence of intervals obtained by listing the intervals in the increasing order of their {\it right endpoints} (with earlier starting intervals being preferred in the case of ties). 
\label{sets}
\end{theorem}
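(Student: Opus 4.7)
The plan is to prove the two inequalities $k\mbox{-wd}(Gr_Q) \le k\mbox{-wd}(Q)$ and $k\mbox{-wd}(Q) \le k\mbox{-wd}(Gr_Q)$ separately. The second inequality is essentially free: any $k$-ary chain partition of the sequence $Gr_Q$ (which respects both the interval order and the listing order) is \emph{a fortiori} a $k$-ary chain partition of the underlying set $Q$, since being a $k$-ary chain of a sequence drops to being a $k$-ary chain of the associated poset simply by forgetting the indices.

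For the nontrivial direction, the main step I would prove is that the listing $Gr_Q$ is a \emph{linear extension} of the interval partial order on $Q$. Concretely, if $I_1 = [a_1,b_1]$ and $I_2 = [a_2,b_2]$ with $I_1 \neq I_2$ and $I_1 \le I_2$ in the interval poset, then by definition $b_1 \le a_2$, and since $a_2 < b_2$ (intervals are proper), we obtain $b_1 < b_2$; hence $I_1$ strictly precedes $I_2$ in the right-endpoint ordering, so no tie-breaking rule is even invoked for comparable intervals. This is the whole content of the lemma — once it is in hand, the rest is formal.

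Given the linear extension property, I take an optimal partition of the poset $Q$ into $k$-ary chains (in the sense introduced after Proposition~\ref{vc}), i.e.\ a collection of $k$-ary rooted trees whose vertex set is $Q$ and such that every ancestor-descendant pair $(I,J)$ in each tree satisfies $I \le J$ in the interval order. By the linear extension property, every such ancestor-descendant pair also satisfies $I$ precedes $J$ in $Gr_Q$, which is exactly the condition needed for this collection of trees to be a valid $k$-ary chain partition of the \emph{sequence} $Gr_Q$ (using the sequence-poset embedding $Q_A$ recalled in Section~\ref{sec:prelim}). Thus $k\mbox{-wd}(Gr_Q) \le k\mbox{-wd}(Q)$.

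I do not expect any real obstacle here: the entire argument turns on the linearization observation, and the strict inequality $b_1 < b_2$ above relies only on the standing assumption that all intervals are proper (have positive length), which was made at the beginning of Section~\ref{sec:prelim}. A minor point worth mentioning explicitly is that the construction does not rely on optimality: it shows that \emph{every} $k$-ary chain partition of the set $Q$ lifts to a $k$-ary chain partition of the sequence $Gr_Q$ with the same number of classes, which yields the inequality and, combined with the converse, the stated equality.
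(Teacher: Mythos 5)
Your proposal is correct, and it takes a genuinely different and more elementary route than the paper. Your whole argument rests on the observation that $Gr_Q$ is a linear extension of the interval order: if $I_1\le I_2$ with $I_1\neq I_2$ then $b_1\le a_2<b_2$ (using that intervals are proper), so comparable intervals appear in strictly increasing right-endpoint order and ties never arise between comparable pairs; hence any partition of the \emph{set} $Q$ into $k$-ary chains is verbatim a partition of the \emph{sequence} $Gr_Q$, and the converse inequality is the trivial forgetting of indices. The paper instead stays inside the greedy/slot framework of Theorem~\ref{intervals}: it proves domination lemmas for swapping the insertion order of two intervals (Lemmas~\ref{domin:order}, \ref{domin} and \ref{domin3}) and then runs a bubble-sort exchange argument to show that the multiset of slots produced by the $Gr_Q$ ordering dominates the one produced by any other ordering, concluding that $Gr_Q$ minimizes the $k$-width among all permutations of $Q$. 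Your argument is shorter, avoids the signature machinery entirely, and in fact also yields that stronger minimality statement for free, since $k\mbox{-wd}(Gr_Q)\le k\mbox{-wd}(Q)\le k\mbox{-wd}(\sigma(Q))$ for every ordering $\sigma$ (the second inequality being the trivial direction applied to $\sigma$). What the paper's heavier approach buys is the slot-level domination information itself, which mirrors Lemma~\ref{lemma:1} and is what supports the particle-process interpretation of Corollary~\ref{cor:2}; your proof establishes the width equality but not that finer correspondence.
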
  

\begin{proof}
Clearly $k$-wd(Q)$\leq k$-wd($Gr_{Q}$), since a partition of $Gr_{Q}$ into $k$-ary chains is also a partition of $Q$.  

To prove the opposite direction we need the following: 

\begin{lemma}
Let $S$ be a multiset of slots. Let $I_{1}=[a_{1},b_{1}],  I_{2}=[a_{2},b_{2}]$ be two intervals such that $b_{1}<b_{2},$ or $b_{1}=b_{2}$ and $a_{1}<a_{2}$. Let $S_{1}$ and $S_{2}$ be the multisets of slots obtained by inserting the two intervals in the order $(I_{1},I_{2})$ and ($I_{2},I_{1}$), respectively. Then $S_{1}$  dominates $S_{2}$. 
\label{domin:order}
\end{lemma}
\begin{proof}

The only nontrivial cases are those for which $S_{1}\neq S_{2}$. This condition can only happen when the insertions of $I_{1},I_{2}$ ``interact''. 

Indeed, let $x,y$ be the values of the largest slots less or equal to $a_{1},a_{2}$, respectively. If $x\neq y$ and $b_{1}$ does not become (after insertion of $I_{1}$) the slot occupied by  $b_{2}$ instead of $y$ (nor does the symmetric situation for the insertion order $I_{2},I_{1}$ hold) then $S_{1},S_{2}$ are obtained by adding $k$ copies of $b_{1},b_{2}$ each, and deleting $x,y$, so we obtain $S_{1}=S_{2}$. 

If instead $x=y$ then the two slots to be deleted (for both insertion orders) are $x$ and the largest slot smaller or equal than $x$. We obtain again $S_{1}=S_{2}$.

The only remaining case is when the slot removed after insertion of $I_{2}$ is $b_{1}$. In this case $S$ cannot contain any element in the range $(b_{1},a_{2})$.  Insertion $(I_{1},I_{2})$ removes $x$, adds $k-1$ copies of $b_{1}$ and $k$ copies of $b_{2}$.  On the other hand, insertion $(I_{2},I_{1})$ may remove some element $x^{\prime}$. It is certainly $x\leq x^{\prime}\leq b_{1}$. It then adds $k$ copies of $b_{2}$.  Then it removes some $x^{\prime \prime}\leq x$ and adds $k$ copies of $b_{1}$. Thus both sets $S_{1},S_{2}$ can be described as adding $k$ copies of $b_{1}$ and $k$ copies of $b_{2}$ to $S$, and then deleting one or two elements, two of them
\begin{itemize}
\item[-] $x$ and $b_{1}$ in the case of order ($I_{1},I_{2}$)
\item[-] $x^{\prime}$ and $x^{\prime \prime}$ in the case of order ($I_{2},I_{1}$).
\end{itemize} 
in the case when some element of $S$ is strictly less than $a_{1}$, and one element
\begin{itemize}
\item[-] $b_{1}$ in the case of order ($I_{1},I_{2}$)
\item[-] $x^{\prime \prime}$ in the case of order ($I_{2},I_{1}$)
\end{itemize} 
otherwise. In the second case the result follows by taking into account the fact that $x^{\prime \prime}\leq b_{1}$ and the following lemma:
\begin{lemma}
Let $S$ be a multiset of slots. Let $s_{2}, s_{1}\in S$ with $s_{2}\leq s_{1}$, and let $S_{1},S_{2}$ be the sets obtained by deleting from $S$ elements $s_{1}$ (or $s_{2}$, respectively). Then $S_{1}$ dominates $S_{2}$. 
\label{domin}
\end{lemma} 
\begin{proof} It follows easily from the simple fact that for every $r\geq 1$ and multiset $W$ the function that maps an integer $x\in W$ to the $r$'th smallest element of $W\setminus x$ is non-increasing (more precisely a function that jumps from the $r+1$'th down to the $r$'th smallest element of $W$)
\end{proof}
The first case is only slightly more involved. Since $x^{\prime}\leq x$, by applying Lemma \ref{domin} twice we infer: 
\begin{align*}
S_{1} & = & S\setminus \{x,b_{1}\}=(S\setminus \{b_{1}\})\setminus \{x\}\preceq (S\setminus \{b_{1}\})\setminus \{x^\prime\}=
(S\setminus \{x^\prime\})\setminus \{b_{1}\}\preceq \\  & \preceq & (S\setminus \{x^\prime\})\setminus \{x^{\prime\prime}\} =  S_{2}.\\
\end{align*}
which is what we wanted to prove. 
\end{proof}

From Lemma \ref{domin:order} we infer the following result
\begin{lemma} Let $1\leq r\leq n$ and let $X,Y$ be two permutations of intervals $I_{1}$, $I_{2},\ldots, I_{n}$, 
\begin{align*}
X=(I_{1},\ldots, I_{r-1},I_{r},I_{r+1}, \ldots I_{n}),\\
Y=(I_{1},\ldots, I_{r-1},I_{r+1},I_{r}, \ldots I_{n}).\\
\end{align*}
respectively (i.e. $X,Y$ differ by a transposition). If $I_{r}\leq I_{r+1}$ (recall, this means that the right endpoint of $I_{r}$ is less or equal than the left endpoint of $I_{r+1}$) then multisets of slots $S_{X},S_{Y}$ obtained by inserting intervals according to the listing specified by $X$ and $Y,\mbox{ respectively, }$ satisfy
\[
S_{X}\preceq S_{Y}.
\]
\label{domin3}
\end{lemma} 
\begin{proof} 
Without loss of generality one may assume that $r=n-1$ (as the result for a general $n$ follows from this special case by repeatedly applying Lemma \ref{lemma:1}). Let $S$ be the multiset of slots obtained by inserting (in this order) intervals $I_{1},\ldots, I_{r-1}$. Applying Lemma \ref{domin:order} to intervals $I_{r},I_{r+1}$ we complete the proof of Lemma \ref{domin3}. 
\end{proof}

Now the opposite direction in the proof of Theorem \ref{sets} follows: the multiset $S_{Gr(Q)}$ of labels obtained by inserting the intervals according to sequence $Gr_{Q}$ dominates any multiset of labels arising from a different permutation, since one can "bubble down" smaller intervals (as in bubble sort), until we obtain $Gr_{Q}$. As we do so, at each step, the new multiset of labels dominates the old one. Hence $S_{Gr(Q)}$ dominates all multisets arising from permutations of $I_{1},\ldots, I_{n}$, so sequence $Gr_{Q}$ minimizes the parameter $k$-wd among all permutations of $Q$.  
\end{proof} 
\begin{corollary} 
The greedy algorithm in Figure \ref{alg:greedy2} computes the $k$-width of an arbitrary set of intervals. 
\end{corollary}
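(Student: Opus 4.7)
The plan is simply to compose Theorem~\ref{sets} with Theorem~\ref{intervals}. The algorithm in Figure~\ref{alg:greedy2} must be interpreted as the two-stage procedure: first sort the input set $Q$ according to the total order $\sqsubseteq$ (increasing right endpoint, ties broken by increasing left endpoint), producing the sequence $Gr_Q$; then run the greedy best-fit insertion of Figure~\ref{alg:greedy} on $Gr_Q$. So I would first verify that Figure~\ref{alg:greedy2} is literally this pipeline, so that its output is a partition of $Q$ (equivalently $Gr_Q$) into $k$-ary chains.

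Next, I would apply Theorem~\ref{intervals} to the sequence $Gr_Q$. This immediately gives that the number of $k$-ary chains produced by the greedy best-fit insertion on $Gr_Q$ equals $k\text{-wd}(Gr_Q)$, i.e.\ the minimum over all partitions of the sequence $Gr_Q$ into $k$-ary chains.

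Finally, Theorem~\ref{sets} asserts $k\text{-wd}(Q)=k\text{-wd}(Gr_Q)$. Chaining the two equalities yields that the output of the algorithm uses exactly $k\text{-wd}(Q)$ chains, which is what had to be shown. There is no genuine obstacle here; the only subtlety worth stating explicitly is that the partition computed for the sequence $Gr_Q$ is automatically a valid partition of the underlying set $Q$ into $k$-ary chains (because the $k$-ary chain condition only requires the comparability $I_i\leq I_j$ in the interval order when $I_j$ is a descendant of $I_i$, and this is an intrinsic property of the intervals, independent of the listing order chosen to feed them to the algorithm). With that observation in place, the corollary follows in one line from Theorems~\ref{intervals} and~\ref{sets}.
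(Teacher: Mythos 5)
Your proposal is correct and matches the paper's intended argument: the corollary is stated as an immediate consequence of Theorem~\ref{sets} combined with Theorem~\ref{intervals}, exactly the composition you describe (sort by $\sqsubseteq$ to obtain $Gr_Q$, apply the greedy optimality for sequences, then use $k\text{-wd}(Q)=k\text{-wd}(Gr_Q)$). Your added remark that a chain partition of the sequence is automatically one of the underlying set is the same observation the paper uses in proving Theorem~\ref{sets}, so nothing is missing.
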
 

\iffalse
\begin{center}
	\fbox{
	    \parbox{11cm}{
		        \textbf{Input:} A set of intervals $I$.\\
				\textbf{Output:} A partition $H$ of $I$ into $k$-ary chains. \\
{\bf SORT} the intervals w.r.t. $\sqsubseteq$: $I=(I_{1},\ldots, I_{n})$. \\		
			\textbf{FOR} $k$ in $1:n$:
			\begin{itemize}
			\item[] \textbf{LET} $I_{k}=[a_{k},b_{k}] $
				\item[] \textbf{IF} $I_{k}$ can be inserted into some empty slot   							
				\begin{itemize}
					\item[] \textbf{THEN} insert $I_k$ in the highest-valued compatible slot.				
					\item[] \textbf{ELSE} create a new $2$-chain rooted at $I_k$ 
				\end{itemize}
			\end{itemize}
	    }
	}
\end{center}
\fi

\begin{figure}
\begin{center}
	\fbox{
	    \parbox{11cm}{
		        \textbf{Input:} A set of intervals $I$.\\
				\textbf{Output:} A partition $H$ of $I$ into $k$-ary chains. \\
{\bf Sort} the intervals w.r.t. $\sqsubseteq$: $I=(I_{1},\ldots, I_{n})$. \\		
			\textbf{For} $i:=1$ to $n$ do:
			\begin{itemize}
				\item[] \textbf{If} $I_{i}=[a_{i},b_{i}] $ can be inserted into some empty slot   							
				\begin{itemize}
					\item[] \textbf{then} insert $I_i$ in the highest-valued compatible slot.				
					\item[] \textbf{else} create a new $k$-chain rooted at $I_i$ 
				\end{itemize}
			\end{itemize}
	    }
	}
\end{center}
\caption{The greedy algorithm for sets of intervals.} 
\label{alg:greedy2}
\end{figure} 

\begin{corollary} Modify the Hammersley interval process to work on sets of intervals by considering them in non-decreasing order according to relation $\sqsubseteq$. Then live particles in the modified process correspond to slots obtained using our greedy insertion algorithm for sets of intervals. New $k$-ary chains correspond to local minima (such particle insertions that have a value lower than the value of any particle that is alive at that particular moment). 
\label{cor:2}
\end{corollary}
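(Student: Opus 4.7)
The plan is to reduce this statement directly to Corollary~\ref{cor:1} via Theorem~\ref{sets}. Recall that Corollary~\ref{cor:1} already established the desired correspondence between live particles of the Hammersley interval tree process and slots of the greedy insertion algorithm, \emph{in the sequence setting}. Meanwhile, Theorem~\ref{sets} (together with the analysis of Lemmas~\ref{domin:order} and~\ref{domin3}) shows that computing the $k$-width of a set $Q$ of intervals is the same as computing the $k$-width of the particular sequence $Gr_Q$ obtained by listing the intervals in non-decreasing $\sqsubseteq$-order. This is precisely the ordering used by the greedy algorithm of Figure~\ref{alg:greedy2}.

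The key observation is therefore essentially definitional: the \emph{modified} Hammersley interval process for sets, as described in the statement, is by construction nothing other than the original Hammersley interval tree process applied to the sequence $Gr_Q$. In particular, at each integer time $n$ the interval that arrives is $I_n=[a_n,b_n]$, the $n$-th interval of $Gr_Q$; its left endpoint $a_n$ removes a life from the largest live particle strictly below $a_n$ (if any), while its right endpoint $b_n$ is inserted as a new particle with $k$ lives. This is exactly the dynamics already analyzed in Section~\ref{sec:interval}.

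Hence the correspondence of Corollary~\ref{cor:1} transfers verbatim: at every moment, the multiset of live particles (counted with remaining lives) coincides with the multiset of empty slots maintained by the greedy algorithm of Figure~\ref{alg:greedy2} on input $Gr_Q$, and a new $k$-ary chain is started by the algorithm exactly when the newly inserted interval $I_n$ fails to find a compatible slot, which by compatibility of slots means precisely that $a_n$ is strictly smaller than every live particle, i.e., $I_n$ corresponds to a local minimum in the Hammersley interval process. No further work is required beyond identifying the two processes, so there is no real obstacle; the content of the corollary is the observation that the set case inherits the structure of the sequence case once the canonical ordering $\sqsubseteq$ is fixed.
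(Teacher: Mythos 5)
Your proposal is correct and matches the paper's (implicit) justification: the paper presents Corollary~\ref{cor:2} as an immediate consequence of Theorem~\ref{sets} and the sequence-case correspondence of Corollary~\ref{cor:1}, exactly your identification of the modified process with the original Hammersley interval tree process run on the $\sqsubseteq$-sorted sequence $Gr_Q$, which is also the order used by the greedy algorithm of Figure~\ref{alg:greedy2}. Nothing further is needed.
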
 

\section{Extension to sequences of elements from a trapezoid partial order} 
\label{trapez} 

The theorem in the previous section is strongly reminiscent of the fact that for interval partial orders a greedy best-fit algorithm computes the 
chromatic number (\cite{olariu1991optimal}). This result has an extension to an even more general class of graphs, that of \emph{trapezoid graphs} (\cite{dagan1988trapezoid}).  Trapezoid graphs are an extension of both interval and permutation graphs that unify many natural algorithms (for problems such as maximum independent set, coloring) for the two classes of graphs. 

As noted in \cite{felsner1997trapezoid}, trapezoid graphs can be equivalently defined as the cocomparability graphs of two-dimensional boxes, with sides parallel to the coordinate axes: 

\begin{definition} A \emph{box} is the set of points in ${\bf R}^{2}$ defined as 
\[
B=\{(x_{1},x_{2})\in \textbf{R}^2\mbox{ }|\mbox{ } l_{i}^{B}\leq x_{i}\leq r_{i}^{B},i=1,2\}
\]
for some numbers $l_{i}^{B}\leq r_{i}^{B}\in \textbf{R}$, where $l_{B}=(l_{1}^{B},l_{2}^{B})$ is \emph{the lower corner} of the box $B$ and $r_{B}=(r_{1}^{B},r_{2}^{B})$ is \emph{the upper corner}. 

The \emph{interval   $I(B)$ associated to box $B$} is the projection onto the $y$ axis of box $B$. Clearly $I(B)=[l^{B}_{2},r^{B}_{2}]$. 
\end{definition} 

The dominance partial order on intervals naturally extends to boxes: 

\begin{definition} The \emph{dominance partial order among boxes} is defined as follows: box \textrm{$B_{1}$ dominates box $B_{2}$} if 
point $r_{B_{1}}$ dominates point  $l_{B_{2}}$, i.e. 
\[
r_{i}^{B_{1}}\leq l_{i}^{B_{2}}\mbox{ for } i=1,2. 
\]
A \textrm{trapezoid partial order} is a poset $P$ induced by the dominance partial order on a finite set $V$ of boxes. $l$ and $u$ refer to the lower and upper corner coordinate functions defining the boxes. That is, for every $v\in V$, $l(v)$ is the lower corner of box $v$ and $u(v)$ is its upper corner. 
\end{definition} 

The box representation allowed the authors of \cite{felsner1997trapezoid} to give a ``sweep-line algorithm'' for coloring trapezoid graphs that improved the coloring algorithm in \cite{dagan1988trapezoid}. %Such an algorithm can be described as moving a (vertical) sweep line $L$ from left to right through the $x$ positions of the lower/upper corners of the boxes from $B$. 

\begin{figure}[ht]
\begin{center}
	\fbox{
	    \parbox{11cm}{
		        \textbf{Input:} A sequence of boxes $\mathcal{B}=(B_{1},B_{2},\ldots, B_{n})$.\\
				\textbf{Output:} A partition $H$ of $\mathcal{B}$ into $k$-ary chains. \\
				
			\textbf{let}  $\mathcal{P}=\{(p_1, p_2) | (p_{1},p_{2}) = l_{B_i} $or$  (p_{1},p_{2}) = u_{B_i}, i \in 1 ... n\}$ \\

			\textbf{initialize} $S=\{d\}$, where $d$ is a real number smaller than all 
			\begin{itemize}  
			\item[] the $y$ coordinates of points $p \in \mathcal{P},$ marked \emph{available} 
			\end{itemize} 
			\textbf{foreach} $p \in \mathcal{P}$ sorted increasingly by the second coordinate \textbf{do}:
			\begin{itemize} 
			\item[] $q$ $\leftarrow$ first available slot below $p_{2}$ in $S$ 
			\item[] \textbf{if} $p=l(v)$ for some $v\in \mathcal{B}$ \textbf{then} 
			\begin{itemize} 
			\item[] \textbf{if} $q=u(w)_{2}$ for some $w\in \mathcal{B}$ \textbf{then}
			\begin{itemize} 
			\item[] insert $v$ in the $k$-ary chain of $w$ as a child of this node
			\item[] remove $q$ from $S$ 
			\item[] add $k$ copies of $p_{2}$ to $S$, marking them unavailable 
			\end{itemize} 
			\item[] \textbf{else} // $(q==d)$
			\begin{itemize} 
			\item[] start a new $k$-ary chain rooted at $v$
			\item[] add $k$ copies of $p_{2}$ to $S$, marking them unavailable 
			\end{itemize}
			\end{itemize} 
			\item[] \textbf{if} $p=u(v)$ for some $v\in \mathcal{B}$ \textbf{then} 
			\begin{itemize} 
			\item[] mark all slots with value $p_{2}$ in $S$  as available
			\end{itemize} 
			\end{itemize} 
			\textbf{return} the set of $k$-ary chains constructed by the algorithm. \\

	    }
	}
\end{center}
\caption{The greedy sweep-line algorithm for $x$-sorted trapezoid sequences.} 
\label{alg:boxes}
\end{figure}

As noted in Section~\ref{interpret}, partition into $k$-heapable sequences  is a natural generalization of coloring permutation graphs. 
In the sequel  we give an algorithm that generalizes the sweep-line coloring algorithm for trapezoid graphs from  \cite{dagan1988trapezoid} to the partition into $k$-ary chains of a particular class of sequences of boxes. 

\begin{theorem} 
Let $\mathcal{B}=(B_{1},B_{2},\ldots, B_{n})$ be a sequence of two-dimensional axis-parallel boxes, totally ordered by the $x$-coordinates of their right endpoints. Then the greedy sweep-line algorithm in Figure~\ref{alg:boxes} computes an optimal partition of sequence $\mathcal{B}$ into $k$-ary chains. 
\label{thm:boxes} 
\end{theorem}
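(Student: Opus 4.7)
The plan is to adapt the domination/signature machinery from the proof of Theorem~\ref{intervals} (Section~\ref{sec:heap-proof}) to the richer sweep-line state used in Figure~\ref{alg:boxes}, exploiting the fact that the total order by right-$x$ coordinate synchronizes the ``availability'' events across all possible partition strategies.

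First, I would establish correctness: when the algorithm assigns $v$ as a child of $w$, the chosen slot $q = u(w)_2$ was marked \textit{available}, which means the event $u(w)$ has already been processed in the sweep and therefore $u(w)_1 \leq l(v)_1$; combined with the condition $q \leq l(v)_2$ from the slot-selection rule, this yields $r_{B_w} \leq l_{B_v}$ componentwise, so $B_w \leq B_v$ in the dominance order. The bound on children per node is enforced by adding exactly $k$ copies of $u(w)_2$ when $l(w)$ is processed and by deleting one copy from $S$ at every child insertion, so at most $k$ descendants are attached to $w$.

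Second, I would prove optimality by a signature argument. For a partial assignment produced after processing a prefix of the sweep, define $\mathrm{sig}$ to be the sorted multiset of \emph{available} slot values at that moment, and use the domination relation $\preceq$ of Section~\ref{sec:heap-proof}. The crucial structural observation is that the \textit{availability flips} are identical for every algorithm: once box $w$ has been assigned (to whichever parent), exactly $k$ copies of $u(w)_2$ enter $S$ at time $l(w)$ as unavailable, and all surviving copies switch to available at time $u(w)$, regardless of who $w$'s parent is. Hence, between two consecutive upper-corner events, the evolution of the available-slot signature is driven solely by lower-corner events, which behave exactly like interval insertions on the $y$-projection: the greedy picks the highest available slot $\leq l(v)_2$, and any competitor picks some other available slot $\leq l(v)_2$ (or opens a new chain). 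The proof of Lemma~\ref{lemma:1} therefore transfers verbatim to show that each lower-corner event propagates domination, and at each upper-corner event both signatures receive the same identical additions of $k$ copies of $u(v)_2$, so domination is preserved across these events as well. By induction on the sweep, the greedy signature always dominates that of any alternative valid partition.

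Finally, the conclusion is analogous to Lemma~\ref{bar}: whenever the greedy opens a new $k$-ary chain upon processing $l(v)$, it is because no available slot in $S$ is $\leq l(v)_2$; by domination, the same holds for any competing algorithm, so its smallest available slot also exceeds $l(v)_2$ and it too must open a new chain. Summing over events gives $k$-wd$(B) \geq $ the number of chains opened by the greedy algorithm, while the algorithm's output is a feasible partition, proving optimality. The main obstacle I expect is handling the two-state (available/unavailable) slot bookkeeping cleanly; the observation that availability is a deterministic function of the sweep position (independent of the partition choices) is what reduces the argument to the one-dimensional interval case and makes the plan go through.
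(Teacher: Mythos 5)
Your proposal is correct and follows essentially the same route as the paper: projecting boxes to their $y$-intervals, tracking a multiset of slots with available/unavailable status, proving by induction over sweep events that the greedy's available-slot signature dominates that of any valid partition (the paper's Lemma~\ref{lemma:1:boxes}, split into insertion steps and state-change steps exactly as you describe), and concluding \`a la Lemma~\ref{bar} that whenever greedy opens a new chain so must the competitor. The one small imprecision is that Lemma~\ref{lemma:1} does not transfer quite ``verbatim'' — at an insertion step the $k$ new slots are unavailable, so the available signature only loses an element and gains nothing until the later flip — but this only simplifies the case analysis, matching the paper's treatment.
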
 

\begin{proof} 
The proof of Theorem~\ref{thm:boxes} leverages and extends the method used for intervals in the proof of Theorem~\ref{intervals}. The fact that sequence $\mathcal{B}$ is sorted in increasing order of the $x$-coordonate of the endpoints allows us to see the problem as one on intervals: instead of dealing with boxes $B_{1},B_{2},\ldots, B_{n}$ we will instead deal with the associated  intervals $I(B_{1}),I(B_{2}),\ldots, I(B_{n})$. 

We will assume that the $x$ and the $y$ coordinates of all boxes in $\mathcal{B}$ are all distinct. As usual with such algorithms, the truth of our statement does not rely on this assumption: if the statement is not true then points can be slightly perturbed to make the assumption true. The algorithm we give then extends to the degenerate cases as well. 

The sweep line maintains a multiset $S$ of \emph{slots}, that correspond to right endpoints of the intervals associated with the boxes processed so far. The difference with respect to the case of Theorem~\ref{intervals} is that the slots are now of one of two types: 
\begin{itemize} 
\item[-] \emph{unavailable: } a slot of this type is present in multiset $S$ but cannot be used to process intervals. 
\item[-] \emph{available: } a slot of this type can be fully employed when processing a new interval. 
\end{itemize} 

The action of a sweep line comprises two types of actions: 
\begin{itemize}
\item[-]\emph{insertion:} This happens when the sweep line reaches the left corner of some box $B_{j}$. We process the interval $I(B_{j})$ similarly to the process in Theorem~\ref{intervals}. Namely, we remove one lifeline from the largest \textbf{available slot} with value at most $l_{2}^{B_j}$, and insert 
into $S$ $k$ slots with value $r_{2}^{B_j}$. These slots are marked \textbf{unavailable.} 
\item[-]\emph{state change:} This happens when the sweep line reaches the right corner of some box $B_{j}$. All the slots with value $r_{2}^{B_j}$ change marking, from unavailable to \textbf{available. }
\end{itemize} 

The intuitive explanation for this modification is clear: a box $D$ can become the parent of another box $E$ only when the right corner of $D$ is to the left (on the $x$ axis) of the left corner of $E$. Thus, if the sweep line has not reached the right endpoint of a box $D$ then this box is not eligible to become the parent of any currently processed box. 

Let  $\sigma_{t}$ be the multiset of all slots created by the sweep-line algorithm up to a given moment $t$. Let $OPT_{t}$ be the multiset of slots corresponding to an \emph{optimal} partition into $k$-ary chains.  Let $\sigma^{'}_{t}$ be the corresponding (multi)set of all slots in $\sigma_{t}$ marked \emph{available} at moment $t$. Let $OPT_{t}^{'}$ be the set of slots in $OPT_{t}$ marked available at moment $t$.

The basis for the proof of Theorem~\ref{thm:boxes} is the following adaptation of Lemma~\ref{lemma:1} to the setting of slots with availability statuses: 

\begin{lemma}
For every $t\geq 0$, $sig(\sigma_{t}^\prime)\preceq sig(OPT_{t}^\prime)$. 
\label{lemma:1:boxes}
\end{lemma}
\begin{proof} 
By induction on $t$. The statement is clear for $t=0$, since both $\sigma_{0}$ and $OPT_{0}$ are empty. 
Assume the claim is true for all $t'<t$.  Let $sig(\sigma_{t-1}^{\prime})=[a_{1},a_{2},\ldots, a_{|sig(\sigma_{t-1}^{\prime})|}]$ and $sig(OPT_{t-1}^{\prime})=[b_{1},b_{2},\ldots, b_{|sig(OPT_{t-1}^{\prime})|}].$ Also, by convention, define $a_{0}=b_{0}=-\infty$ and \\ $a_{|sig(\sigma_{t-1}^{\prime})|+1}=b_{|sig(OPT_{t-1}^{\prime})|+1}=+\infty$. Finally, let $[l_{s}^{2},r_{s}^{2}]$ is the interval to be processed at stage $t$. 

Proving that $\sigma_{t-1}^{\prime} \preceq OPT_{t-1}^{\prime}$ entails proving that $|sig(\sigma_{t}^{\prime})|\leq |sig(OPT_{t}^{\prime})|$ and for all indices $1 \leq l \leq |sig(\sigma_{t}^{\prime})|$:
\begin{equation}
	sig(\sigma_{t}^{\prime})[l] \leq sig(OPT_{t}^{\prime})[l].
	\label{l}
\end{equation}
\begin{itemize} 
\item \textbf{Case 1: $t$ is an insertion step: }

$\sigma_{t}^\prime$  is obtained from $\sigma_{t-1}^\prime$ by removing the largest available slot less or equal to $l_{s}^{2}$. $OPT_{t}^\prime$ is obtained from $OPT_{t-1}^\prime$ by perhaps removing some slot with value at most $l_{s}^{2}$. 

The inequality $|\sigma_{t}^{\prime}|\leq |OPT_{t-1}^{\prime}|$ follows easilty from the corresponding inequality at stage $t-1$. Indeed, 
$\sigma_{t-1}^{\prime}$ may lose an element (in which case $OPT_{t-1}^{\prime}$ may also lose at most one element) or stay the same (in which case $OPT_{t-1}^{\prime}$ also stays the same (because there is no available slot to lose a lifeline.)

As for inequality~(\ref{l}): if $OPT_{t-1}^{\prime}$ does not lose an element the inequality follows easily from the induction hypothesis for stage $t-1$ and the fact that elements of $\sigma_{t-1}^{\prime}$ stay in place or shift to the left. 

If both $\sigma_{t-1}^{\prime}$ and $OPT_{t-1}^{\prime}$ lose one element to yield  $\sigma_{t}^{\prime}, OPT_{t}^{\prime}$, there are four types of positions $l$: 
\begin{itemize} 
\item Those below both deleted positions. They are unchanged as we move from $\sigma_{t-1}^{\prime}$ and $OPT_{t-1}^{\prime}$ to $\sigma_{t}^{\prime}, OPT_{t}^{\prime}$. That is: 
\[
\sigma_{t}^{\prime}[l]=\sigma_{t-1}^{\prime}[l]\mbox{ and } OPT_{t}^{\prime}[l]= OPT_{t-1}^{\prime}[l]. 
\]
Hence inequality~(\ref{l}) is true by the induction hypothesis. 
\item Those above both deleted positions. They get shifted to the left by one in both $\sigma_{t}^{\prime}$ and $OPT_{t}^{\prime}$. That is: 
\[
\sigma_{t}^{\prime}[l]=\sigma_{t-1}^{\prime}[l+1]\mbox{ and } OPT_{t}^{\prime}[l]= OPT_{t-1}^{\prime}[l+1]. 
\]

 Hence again inequality~(\ref{l}) is true by the induction hypothesis. 
\item Those below one but above the other deleted position. By the fact that the deleted slot in $OPT_{t-1}^{\prime}$ is less or equal 
than the one in $\sigma_{t-1}^{\prime}$ (since this is the largest available slot less or equal to $l_{s}^{2}$), it follows that 
\[
\sigma_{t}^{\prime}[l]=\sigma_{t-1}^{\prime}[l]\mbox{ and }OPT_{t}^{\prime}[l]=OPT_{t-1}^{\prime}[l+1]. 
\]
Hence 
\[
\sigma_{t}^{\prime}[l]=\sigma_{t-1}^{\prime}[l]\leq OPT_{t-1}^{\prime}[l]\leq OPT_{t-1}^{\prime}[l+1]= OPT_{t}^{\prime}[l]. 
\]
so relation~(\ref{l}) holds in all cases. 
\end{itemize}

\item \textbf{Case 2: $t$ is a change step: } The effect of such a step is that, both in $\sigma_{t-1}$ and $OPT_{t-1}$ the $k$ slots with value $r_{s}^{2}$ (that were previously inserted, but marked unavailable) become available. 

Since $|\sigma_{t}^{\prime}|=|\sigma_{t-1}^{\prime}|+k$ and, similarly, $|OPT_{t}^{\prime}|=|OPT_{t-1}^{\prime}|+k$, statement $|\sigma_{t}^{\prime}|\leq |OPT_{t}^{\prime}|$ follows from the analogous statement for $t-1$. 

Positions $l$ before the insertion points of the $k$ copies of $r_{s}^{2}$ are not modified in $\sigma_{t}^{\prime}, OPT_{t}^{\prime}$ so equation~(\ref{l}) follows for such $l$'s from the corresponding inequalities for stage $t-1$. Similarly, positions larger than both insertion points get shifted by exactly $k$, so inequality~(\ref{l}) also follows for such $l$'s by the corresponding inequality for stage $t-1$. 

These two cases cover all positions $l$ for which both values are different from the newly inserted values equal to $r_{s}^{2}$. If \emph{both} positions are equal to $r_{s}^{2}$ the inequality also follows. The only remaining cases are those $l$ for which one of 
$\sigma_{t}^{\prime}[l], OPT_{t}^{\prime}[l]$ is equal to the newly inserted value $r_{s}^{2}$ but the other is not. On inspection, though, inequality~(\ref{l}) is true in these cases as well: because of dominance, the insertion point into $OPT_{t-1}^{\prime}$ is to the left (or equal) to the insertion point into $\sigma_{t-1}^{\prime}$. So if $\sigma_{t}^{\prime}[l]\neq r_{s}^{2}$ but $OPT_{t}^{\prime}[l]=r_{s}^{2}$
then $\sigma_{t}^{\prime}[l]< r_{s}^{2}= OPT_{t}^{\prime}[l]$. The reasoning in the case $\sigma_{t}^{\prime}[l]= r_{s}^{2}$ but $OPT_{t}^{\prime}[l]\neq r_{s}^{2}$ is analogous, the conclusion being that $\sigma_{t}^{\prime}[l]= r_{s}^{2}< OPT_{t}^{\prime}[l]$.
\end{itemize} 
\end{proof} 

Using Lemma~\ref{lemma:1:boxes} we infer that at every step $t$ when the greedy sweep-line algorithm creates a new $k$-ary chain (because there is no available slot to lose a lifeline), so does the optimal algorithm (for the very same reason). Hence the greedy sweep-line algorithm is optimal. 
\end{proof} 

\section{Maximal heapable subsets of interval orders}
\label{maxheap} 
Finally, note that we are unable to solve the open problem from \cite{byers2011heapable} on the complexity of computing a maximum heapable subsequence, i.e. a maximum treelike $2$-independent set in permutation posets (see the definition and discussion in Section \ref{interpret}). Instead, we settle this problem for a different subclass of partial orders, the class of interval orders: 

\begin{figure}
\begin{center}
	\fbox{
	    \parbox{13cm}{
		        \textbf{Input:} A set of intervals $I$.\\
				\textbf{Output:} A $k$-ary chain  $J\subseteq I$ of maximum cardinality. \\
				
{\bf Sort} the intervals w.r.t. $\sqsubseteq$: $SI=(I_{1},\ldots, I_{n})$. \\		
{\bf Let} $J=\emptyset$. \\		
			\textbf{For} $i:=1$ to $n$ do:
			\begin{itemize}
				\item[] \textbf{If} $I_{i}=[l_{i},r_{i}] $ can be inserted into some empty slot   							
				\begin{itemize}
					\item[] \textbf{then} 
					\item[] \hspace{1cm} $J=J\cup \{i\}$. 
					\item[] \hspace{1cm} insert $I_i$ in the highest-valued compatible slot.				
				\end{itemize}
			\end{itemize}
	    }
	}
\end{center}
\caption{The greedy best-fit algorithm for sets of intervals.} 
\label{alg:greedy3}
\end{figure} 

\begin{theorem} Given a set of intervals $I$, the best-fit algorithm in Figure~\ref{alg:greedy3} computes a largest $k$-heapable subset of intervals of the set $I$. 
\label{max}
\end{theorem}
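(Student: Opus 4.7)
I plan to prove the theorem by induction on $t$, the number of intervals processed, maintaining a strong invariant that combines optimality on every prefix with the dominance structure already developed for Theorem~\ref{intervals}. Sort the intervals so that $I_1 \sqsubseteq \cdots \sqsubseteq I_n$ and let $M_t$ denote the cardinality of a maximum heapable subset of $\{I_1, \ldots, I_t\}$; let $J^G_t$ and $G_t$ denote the accepted set and the multiset of slot values produced by the algorithm after step $t$. The invariant has two clauses: \textbf{(I1)} $|J^G_t| = M_t$; \textbf{(I2)} for every maximum heapable subset $O$ of $\{I_1, \ldots, I_t\}$ and every valid $k$-ary tree realizing $O$ with slot multiset $S_O$, one has $G_t \preceq S_O$.

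The structural fact that drives the induction is that $I_t$ has the largest right endpoint among $\{I_1, \ldots, I_t\}$, hence in any heap tree on a subset of $\{I_1, \ldots, I_t\}$ containing $I_t$ the node $I_t$ is forced to be a leaf (no other element of the prefix can be its descendant in $\leq$). This lets me pass freely between maximum heapable subsets of the prefix of length $t$ and those of length $t-1$ by adding or removing $I_t$ at a leaf.

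The inductive step splits into two cases. If greedy rejects $I_t$, then $G_{t-1}$ has no slot of value at most $a_t$; by (I2) the slot multiset $S_{O'}$ of any maximum heapable $O'$ of $\{I_1, \ldots, I_{t-1}\}$ satisfies $\min S_{O'} \geq \min G_{t-1} > a_t$, so $I_t$ cannot be appended as a leaf to any such $O'$. Combined with the leaf observation, this forces $M_t = M_{t-1}$, and both (I1), (I2) transfer immediately since $G_t = G_{t-1}$. If greedy accepts $I_t$, then $|J^G_t| = M_{t-1}+1$ and thus $M_t = M_{t-1}+1$; any maximum $O$ of $\{I_1, \ldots, I_t\}$ must contain $I_t$, and its removal yields a maximum heapable $O \setminus \{I_t\}$ of $\{I_1, \ldots, I_{t-1}\}$. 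Applying (I2) at $t-1$ gives $G_{t-1} \preceq S_{O \setminus \{I_t\}}$, and then Lemma~\ref{lemma:1} applied to the simultaneous insertion of $I_t$ (greedy into $G_{t-1}$, the canonical insertion into $S_{O \setminus \{I_t\}}$ at the slot where $I_t$ sits in $O$'s tree) propagates the dominance to $G_t \preceq S_O$.

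The main obstacle lies in cleanly invoking Lemma~\ref{lemma:1} in the acceptance case. One must verify that the two multisets $G_{t-1}$ and $S_{O \setminus \{I_t\}}$ have equal cardinalities (both realize heapable sets of size $M_{t-1}$, hence each has $(k-1)M_{t-1}+1$ slots), that each admits a compatible slot for $I_t$ (from greedy accepting, and from heap validity of $O$ respectively), and that the lemma's case analysis applies when both are genuine insertions rather than ``open a new chain'' steps which do not occur in the max-heapable setting. Once the invariant is propagated all the way to $t = n$, clause (I1) yields $|J^G| = M_n$, which is exactly the statement of the theorem.
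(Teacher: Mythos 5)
Your overall strategy (greedy is optimal on every prefix, certified by a dominance invariant and Lemma~\ref{lemma:1}) is reasonable, and your accept case and the leaf observation about $I_t$ are fine. The problem is the maintenance of (I2) in the rejection case. You assert that (I1) and (I2) ``transfer immediately since $G_t=G_{t-1}$,'' but (I2) at step $t$ quantifies over \emph{all} maximum heapable subsets of $\{I_1,\ldots,I_t\}$ and all their tree realizations. When greedy rejects $I_t$, a maximum subset $O$ of the new prefix need not be a maximum subset of the old one: it may \emph{contain} $I_t$, attached as a leaf to a subset $O'=O\setminus\{I_t\}$ of $\{I_1,\ldots,I_{t-1}\}$ of size $M_{t-1}-1$, i.e.\ a \emph{non-maximum} subset which happens to keep a slot of value at most $a_t$ open. (Your own rejection argument only rules out attaching $I_t$ to \emph{maximum} subsets of the old prefix; attaching it to smaller ones is exactly how such $O$ arise, and they do arise in general.) The induction hypothesis (I2) says nothing about slot multisets of non-maximum subsets, so the required dominance $G_t\preceq S_O$ for these ``capped'' maximum subsets is simply not established. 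This is not a vacuous omission: at later steps both your rejection analysis and your acceptance analysis invoke (I2) for an arbitrary maximum subset of the current prefix (in the accept case, $O\setminus\{I_s\}$ may itself be such a capped subset), so the induction does not close as written.

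To repair it you would have to either strengthen the invariant so that it also controls slot multisets of heapable subsets of every cardinality (suitably aligned, so that capping a size-$(M_{t-1}-1)$ subset with $k$ copies of the current maximal right endpoint cannot undercut greedy's signature), and prove that strengthened statement survives both cases, or abandon the ``dominate every optimum'' formulation altogether. The paper's proof takes the second route: it maintains, by an exchange argument, a \emph{single} optimal witness tree that agrees with greedy on the presence or absence of each processed interval (when greedy accepts $I_r$ but the witness does not contain it, the interval $J$ occupying greedy's position in the witness is swapped out for $I_r$, which is legal because $I_r$ ends no later than $J$), and it uses the dominance Lemma~\ref{lemma:1} only to argue that when greedy has no free slot the witness has none either. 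You also leave the initialization implicit (the algorithm as stated has no slot for $I_1$; one must argue, as the paper does for $r=1$, that rooting the unique tree at $I_1$ is without loss of optimality), but that is minor compared with the (I2) maintenance gap.
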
 
\begin{proof} 

Note first that, whereas the theorem deals with \emph{subsets} of the set of intervals, the algorithm in Figure~\ref{alg:greedy3} considers the intervals in the set in a specific order (by sorting them with respect to $\sqsubseteq$). This will turn not to be a problem, because we prove by induction the following result, which implies the statement of the theorem: 

\begin{lemma} For every $1\leq r\leq n$ there exists a largest $k$-heapable subset $\Gamma_{r}$ of $\{I_{1},I_{2}\ldots, I_{n}\}$ and a $k$-heap $T_{r}$ for $\Gamma_{r}$ such that the tree $T_{G}$ built by the greedy algorithm of Figure~\ref{alg:greedy3} agrees with $T_{r}$ with respect to the presence or absence of intervals $I_{1},I_{2}\ldots, I_{r}.$ 
\end{lemma} 
\begin{proof} By induction. 

The result is simple for $r=1$: if $I_{1}$ is part of an optimal $k$-heapable subset then there is nothing to prove, since $I_{1}$ can only participate in a $k$-heapable subset as the root of its corresponding tree. This happens because  $I_{1}\sqsubseteq I_{s}$ for all $s\geq 1$. If, on the other hand, $I_{1}$ is not part of an optimal subset, then consider such a solution $\Gamma$. Create a new solution $\Gamma_{1}$ with the same tree shape by replacing the root interval $I$ of $\Gamma$ by $I_{1}$ (thus obtaining tree $T_{1}$). This is legal, since $I_{1}$ ends earlier than $I$. 

Assume we have proved the result for $1\leq i\leq r-1$. Consider now the optimal solution $\Gamma_{r-1}$ (with $k$-ary tree $T_{r-1}$) agreeing with the greedy solution $T_{G}$ on the presence of the first $r-1$ intervals. 
\begin{itemize}
\item If $I_{r}$ cannot be inserted into $T_G$ (that is, in the portion of $T_{G}$ constructed by the greedy-best fit algorithm after considering intervals $I_1, I_{2},\ldots, I_{r-1}$) it means that no slot is available for $I_{r}$. By domination (Lemma~\ref{lemma:1}) this must be true for both the greedy and the optimal solution $\Gamma_{r-1}$. Therefore these two solutions do not contain $I_{r}$, hence they agree on the presence or absence of $I_{1},I_{2},\ldots, I_{r}$. 
\iffalse
\item If $I_{r}$ is present both in $\Gamma_{r-1}$ and in the tree $T_{G}$ constructed by the greedy algorithm, but have different parent nodes (say, $x$ in $\Gamma_{r-1}$ and $y$ in $T$), then by swapping the subtrees of $\Gamma_{r-1}$ inserted in slots $x,y$ we obtain a heap-ordered tree $\Gamma_{r}$ with the same size (thus optimal) that satisfies the induction hypothesis. 

Swapping the two trees is indeed possible: by the optimality of greedy choice, $y$ is the largest-valued slot compatible with $I_{r}$, hence $x\leq y$. Since $J$ is compatible with a $y$-valued slot, it is also compatible with a $x$-valued slot. 
\fi 
\item Suppose now that $I_{r}$ can be inserted in a slot (thus is present in the greedy solution) but is {\bf not} present in the optimal solution $\Gamma_{r}$. Let $J$ be the interval that has the same parent in $\Gamma_{r}$ as $I_{r}$ has in the greedy solution (Figure~\ref{first-flow}). $J$ must exist, otherwise one could simply extend $\Gamma_{r-1}$ by inserting $I_{r}$ at that position. 

By the order we considered, the intervals $I_{r}$ ends no later than $J$ does. So by inserting $I_{r}$ instead of $J$ we obtain an optimal solution $\Gamma_{r}=\Gamma_{r-1}\setminus \{J\}\cup \{I_{r}\}
$ that satisfies the induction property. 
\end{itemize}  
\end{proof} 

The conclusion of the induction argument is that there exists an optimal tree containing \emph{exactly the same} intervals as those selected by $T_{G}$. Thus the greedy best-fit algorithm produces an optimal solution. 
\end{proof}

\begin{figure}[ht]
  \begin{center}
  
    \begin{minipage}{.25\textwidth} 
      
		\begin{tikzpicture}[scale=0.6]%,cap=round,>=latex]
		\coordinate [label=left:$$] (A) at (-2cm,-1.cm);
		\coordinate [label=right:$$] (C) at (2cm,-1.0cm);
		\coordinate [label=above:$\Gamma_{r-1}$] (B) at (1cm,2.0cm);
		\coordinate [label=above:$y$] (X) at (1cm,-0.9cm);

		\coordinate [label=above:$$] (F) at (-0.5cm,-0.9cm);
		\coordinate [label=above:$$] (F1) at (-0.5cm,-1.1cm);
		\coordinate [label=above:$$] (F2) at (-0.5cm,-0.8cm);

		\coordinate [label=left:$J$] (J) at (1cm,-2cm);
		\coordinate [label=above:$$] (J1) at (1.5cm,-4.0cm);
		\coordinate [label=above:$$] (J2) at (0.5cm,-4.0cm);
		
		%\coordinate [label=left:$L$] (L) at (-0.5cm,-2.0cm);
		\coordinate [label=above:$$] (L1) at (-1.5cm,-4.0cm);
		\coordinate [label=above:$$] (L2) at (-0.5cm,-4.0cm);
		
		\draw [->] (X) -- (J);
		%\draw [->] (F2) -- (L);
		%\draw (F1) -- (F2);
		\draw (A) -- node[sloped,above] {} (B) -- node[sloped,above,] {} (C) -- node[below] {} (A);
		\draw[dashed] (J) -- node[sloped,above] {} (J1) -- node[sloped,above,] {} (J2) -- node[below] {} (J);
		%\draw[dashed] (L) -- node[sloped,above] {} (L1) -- node[sloped,above,] {} (L2) -- node[below] {} (L);
		
		\end{tikzpicture}      
      
   \end{minipage}
   \begin{minipage}{.25\textwidth} 

		\begin{tikzpicture}[scale=0.6]%,cap=round,>=latex]
		\coordinate [label=left:$$] (A) at (-1cm,-1.cm);
		\coordinate [label=right:$$] (C) at (2cm,-1.0cm);
		\coordinate [label=above:$T_{G}$] (B) at (-1.0cm,2.0cm);
		\coordinate [label=above:$y$] (Y) at (1cm,-0.8cm);		
		
		\coordinate [label=left:$I_{r}$] (J) at (1cm,-2cm);
		\coordinate [label=above:$$] (J1) at (1.5cm,-4.0cm);
		\coordinate [label=above:$$] (J2) at (0.5cm,-4.0cm);

		\draw [->] (Y) -- (J);
		\draw (A) -- node[sloped,above] {} (B) -- node[sloped,above,] {} (C) -- node[below] {} (A);
		\draw[dashed] (J) -- node[sloped,above] {} (J1) -- node[sloped,above,] {} (J2) -- node[below] {} (J);

		\end{tikzpicture}

     \end{minipage} $\Rightarrow$
    \begin{minipage}{.25\textwidth} 
      
		\begin{tikzpicture}[scale=0.6]%,cap=round,>=latex]
		\coordinate [label=left:$$] (A) at (-2cm,-1.cm);
		\coordinate [label=right:$$] (C) at (2cm,-1.0cm);
		\coordinate [label=above:$\Gamma_{r}$] (B) at (1cm,2.0cm);
		\coordinate [label=above:$y$] (X) at (1cm,-0.9cm);

		%\coordinate [label=above:$x$] (F) at (-0.5cm,-0.8cm);
		\coordinate [label=above:$$] (F1) at (-0.5cm,-1.1cm);
		\coordinate [label=above:$$] (F2) at (-0.5cm,-0.8cm);

		\coordinate [label=left:$I_{r}$] (J) at (1cm,-2cm);
		\coordinate [label=above:$$] (J1) at (1.5cm,-4.0cm);
		\coordinate [label=above:$$] (J2) at (0.5cm,-4.0cm);
		
		%\coordinate [label=left:$J$] (L) at (-0.5cm,-2.0cm);
		\coordinate [label=above:$$] (L1) at (-1cm,-4.0cm);
		\coordinate [label=above:$$] (L2) at (0cm,-4.0cm);
		
		\draw [->] (X) -- (J);
		%\draw [->] (F2) -- (L);
		%\draw (F1) -- (F2);
		\draw (A) -- node[sloped,above] {} (B) -- node[sloped,above,] {} (C) -- node[below] {} (A);
		\draw[dashed] (J) -- node[sloped,above] {} (J1) -- node[sloped,above,] {} (J2) -- node[below] {} (J);
		%\draw[dashed] (L) -- node[sloped,above] {} (L1) -- node[sloped,above,] {} (L2) -- node[below] {} (L);
		
		\end{tikzpicture}      
      
   \end{minipage}

  \end{center}
\caption{Creating optimal solution $\Gamma_{r}=\Gamma_{r-1}\setminus \{J\}\cup \{I_{r}\}
$. }
\label{first-flow} 
\end{figure}

 \section{Open questions and future work}
\label{sec:open}

Our Theorem \ref{main:thm} is very similar to (the proof of) Dilworth's theorem. It is not {\it yet} a proper generalization of this result  to the case $k\geq 1$ because of the lack of a suitable extension of the notion of {\it antichain}: 

\begin{open} 
Is there a suitable definition of the concept of $k$-antichain, that coincides with this concept for $k=1$ and leads (via our theorem \ref{main:thm}) to an extension of Dilworth's theorem?
\end{open} 

On the other hand, results in Section \ref{sec:interval} naturally raise the following:

\begin{open}
For which partial orders $Q$ can one compute the parameter $k$-wd(Q) (and an associate optimal $k$-ary chain decomposition) via a greedy algorithm? 
\end{open} 
 
Several open problems concern the limit behavior of the expected value of the $k$-width of a set of random intervals, for $k\geq 1$.  As discussed in Section \ref{sec:prelim}, for $k=1$ the scaling behavior of this parameter is known (\cite{justicz1990random}). However, in the case of random permutations, the most illuminating description of this scaling behavior is by analyzing the hydrodynamic limit of the Hammersley process (\cite{aldous1995hammersley,groeneboom2002hydrodynamical}). As shown in Section \ref{sec:sets}, the difference between sequences and sets of intervals is not substantial. 

We ask, therefore, whether the success in analyzing this process for random permutations can be replicated in the case of sequences/sets of random intervals: 

\begin{open} 
Analyze the hydrodynamic limit of the Hammersley process for sequences/sets of random intervals. 
\end{open}

When $k\geq 2$ even the scaling behavior is not known, for both sequences and sets of random intervals. The connection with the interval Hammersley process given by Corollaries ~\ref{cor:1} and ~\ref{cor:2} provides a convenient, lean way to simulate the dynamics, leading to experimental observations on the scaling constants. A C++ program used to perform these experiments is publicly available at \cite{heapable-interval}.  Based on these experiments we would like to raise the following: 

\begin{conjecture} For every $k\geq 2$ there exists a positive constant $c_{k}>0$ such that, if $R_{n}$ is a sequence of $n$ random intervals then 
\begin{equation} 
\lim_{n\rightarrow \infty} \frac{E[k\mbox{-wd}(R_n)]}{n}=c_{k}
\end{equation} 
Moreover $c_{k}=\frac{1}{k+1}$.
\end{conjecture}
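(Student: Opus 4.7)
The plan is to analyze the Hammersley interval tree process from Corollary~\ref{cor:1} in a hydrodynamic limit. Recall that $k\mbox{-wd}(R_{n})$ counts the number of arrivals which create a new $k$-ary chain; by Theorem~\ref{sets} I may process the intervals in the $\sqsubseteq$ order, i.e.\ in increasing order of right endpoint. Under this convention, after relabelling, the $i$-th right endpoint is approximately $i/n$, and conditionally on it equalling $t$ the corresponding left endpoint is uniform on $(0,t)$. Each arrival deposits $k$ fresh lifelines at the moving right boundary and erases one lifeline at the largest live slot below a uniform point of $(0,t)$, or starts a new chain if no such slot exists.

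First I would prove a fluid limit: rescaling time by $1/n$, show that the empirical measure of live lifelines, normalised by $n$, concentrates as $n\to\infty$ on a deterministic density $\rho(\tau,x)$ supported on $(0,\tau)$. The dynamics should translate into an integro-differential equation for the cumulative function $F(\tau,x)=\int_{0}^{x}\rho(\tau,y)\,dy$, with a point source of mass $k$ injected at $x=\tau$ and a unit current removed, uniformly in $a\in(0,\tau)$, at the location of the rightmost live slot in $[0,a]$. The rate at which new chains are created is then
\begin{equation}
\frac{dC}{d\tau}=\frac{m(\tau)}{\tau},
\end{equation}
where $m(\tau)$ denotes the infimum of the support of $\rho(\tau,\cdot)$. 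A self-similar ansatz $\rho(\tau,x)=\sigma(x/\tau)$, natural because the model possesses the scaling $(\tau,x)\mapsto(\lambda\tau,\lambda x)$, collapses this to $m(\tau)/\tau\equiv c_{k}$ and turns the fluid equation into an ODE for $\sigma$ on the interval $(c_{k},1)$.

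The conjectured value $c_{k}=1/(k+1)$ should then drop out of two compatibility relations. The first is a global lifeline balance: each arrival adds $k$ lifelines or only $k-1$ depending on whether it starts a new chain, so the total mass $\int_{0}^{1}\sigma(u)\,du$ must equal $(k-1)+c_{k}$. The second comes from the dynamics at the leftmost live position: a lifeline at $m(\tau)$ is consumed precisely when the uniform $a\in(0,\tau)$ falls below the second-smallest live slot, and balancing this consumption against the material being transported leftwards by the removal current yields a second relation between $c_{k}$ and $\int_{0}^{1}\sigma$. Solving the resulting two-by-two system should produce $c_{k}=1/(k+1)$ and, incidentally, $\int_{0}^{1}\sigma=k^{2}/(k+1)$.

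The main obstacle, inherited from similar analyses of the permutation Hammersley tree process~\cite{basdevant2016hammersley,basdevant2017almost}, is the rigorous derivation of the fluid limit. Two features of the interval version make this substantially more delicate than in the permutation case: the position of each new particle is not uniform on $(0,1)$ but is concentrated at the moving right boundary, and the removal window $(0,\tau)$ grows with time, so the process is strongly non-stationary in space. A further subtlety is that the behaviour is discontinuous in $k$ at $k=1$, where the expected width is only $\Theta(\sqrt{n})$~\cite{justicz1990random}; any concentration estimate must therefore genuinely exploit $k\ge 2$, presumably through the fact that the extra lifelines prevent the leftmost slot from being swept all the way down to $0$. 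A useful intermediate target is to prove the weaker claim that $E[k\mbox{-wd}(R_{n})]/n$ is bounded away from both $0$ and $1$, for instance by a direct coupling with the permutation Hammersley tree process of~\cite{istrate2015heapable}, before attempting the sharp identification of $c_{k}$.
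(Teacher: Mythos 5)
There is nothing to compare your argument against: in the paper this statement is not a theorem but an open conjecture, put forward on the strength of simulations of the interval Hammersley tree process (Corollaries~\ref{cor:1} and~\ref{cor:2}) with a publicly available C++ program; no proof, and not even a heuristic derivation of the value $\frac{1}{k+1}$, is offered. Your proposal is therefore not being measured against an existing proof, but it also does not itself constitute one. The core analytic step --- concentration of the rescaled lifeline configuration around a deterministic hydrodynamic profile $\rho(\tau,x)$ --- is exactly the hard part, and you state it as something ``to prove'' without any argument; as you note, the non-stationarity (particles injected at the moving boundary $x=\tau$, removal window $(0,\tau)$ growing in time) puts this outside the scope of the existing analyses of the permutation Hammersley tree process, so one cannot simply cite \cite{basdevant2016hammersley,basdevant2017almost}. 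The self-similar ansatz $\rho(\tau,x)=\sigma(x/\tau)$ and the uniqueness of the limiting profile are likewise assumed rather than established.

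More importantly, even at the purely heuristic level your argument does not reach the conjectured constant. The first balance relation (total normalized mass $=(k-1)+c_{k}$, since each arrival deposits $k$ lifelines and consumes one unless it founds a new chain) is fine, but the second relation --- the ``balance at the leftmost live position'' --- is never written as an equation; you only assert that solving a two-by-two system ``should produce'' $c_{k}=\frac{1}{k+1}$. Without that second equation the system is underdetermined and the value of $c_{k}$ is not derived at all, so the proposal currently establishes neither the existence of the limit, nor its positivity, nor the claimed value. A genuinely attainable piece you could extract and prove rigorously is the existence of the limit itself: $k$-wd is subadditive under concatenation of independent blocks of random intervals, so $E[k\mbox{-wd}(R_{n+m})]\leq E[k\mbox{-wd}(R_{n})]+E[k\mbox{-wd}(R_{m})]$ and Fekete's lemma gives convergence of $E[k\mbox{-wd}(R_{n})]/n$ to some $c_{k}\geq 0$; combined with your suggested coupling arguments for bounds away from $0$ and $1$, this would settle the first half of the conjecture while leaving the identification $c_{k}=\frac{1}{k+1}$ open, which is an honest reflection of where the difficulty actually lies.
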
 

According to this conjecture, just as in the case of random permutations, the scaling behavior of the $k$-width changes when going from $k=1$ to $k=2$. Note, though, that the direction of change is different ($\Theta(\sqrt{n})$ to $\Theta(\log{n})$ for integer sequences, $\Theta(\sqrt{n})$ to $\Theta(n)$ for sequences of intervals). 
On the other hand, somewhat surprisingly, the scaling behavior of sets of random intervals seems to be similar to that for sequences: 

\begin{conjecture} For every $k\geq 2$ there exists a positive constant $d_{k}>0$ such that, if $W_{n}$ is a {\bf set} of $n$ random intervals then 
\begin{equation} 
\lim_{n\rightarrow \infty} \frac{E[k\mbox{-wd}(W_n)]}{n}=d_{k}
\end{equation} 
Experiments suggest that $d_{2}=c_{2}=\frac{1}{3}$, and similarly for $k=3,4$ $d_{k}=c_{k}=\frac{1}{k+1}$. Therefore we conjecture  that 
\begin{equation}
d_k=c_k=\frac{1}{k+1}\mbox{ for all }k\geq 2. 
\end{equation} 
\end{conjecture}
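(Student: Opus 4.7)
The plan is to establish the conjecture in two stages: first prove the existence of the limit $d_k$, then identify its value as $1/(k+1)$. Throughout, I would freely use Theorem~\ref{sets} to pass from sets to the $\sqsubseteq$-sorted sequence $Gr_{W_n}$, so that $k\text{-wd}(W_n)$ is exactly the output of the greedy best-fit algorithm of Figure~\ref{alg:greedy2}, and I would use the interval Hammersley tree process of Corollary~\ref{cor:2} to encode the dynamics in a form amenable to probabilistic analysis.

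For existence and concentration, I would first apply McDiarmid's bounded-differences inequality: replacing a single random interval can alter the greedy output by only $O(1)$ trees, because the cascading perturbation to subsequent slot assignments is absorbed after finitely many insertions in expectation. This yields $|X_n - E[X_n]| = O(\sqrt{n \log n})$ with high probability. For the limit itself, I would use an approximate subadditivity argument obtained by splitting $(0,1)$ at its midpoint: conditioning on the partition of intervals into those contained in $(0, 1/2)$, those contained in $(1/2, 1)$, and the crossing set, the first two classes are (after rescaling) independent random-interval sets that can be processed separately, while crossing intervals are disposed of by placing each in its own tree, contributing a controllable penalty that grows linearly but with a coefficient that can be made arbitrarily small by iterating the split. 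Fekete's lemma applied to the resulting sequence would then yield convergence of $E[X_n]/n$ to some $d_k \ge 0$.

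For the exact value $d_k = 1/(k+1)$, I would analyze the interval Hammersley tree process in its hydrodynamic limit. Let $\rho(x, t)$ denote the asymptotic density of available slots of value near $x$ after a fraction $t$ of the $n$ intervals have arrived. Each arriving interval $[a,b]$ removes one lifeline from the largest available slot of value at most $a$ and inserts $k$ slots at $b$; a new tree is born precisely when $a$ falls below the support of $\rho(\cdot, t^-)$. The conjectured constant $1/(k+1)$ should emerge as the time-integral of this birth rate, and admits the heuristic interpretation that each tree eventually collects, on average, $k+1$ intervals (root plus one per slot). An alternative that may bypass much of the PDE machinery is a direct combinatorial-coupling argument: show that, in the greedy algorithm, each slot is eventually filled with probability $1 - o(1)$ as $n \to \infty$, by coupling the slot-filling events with a branching process that dominates the empirical distribution of unused slots; if this works, the mean tree size is $k + 1 + o(1)$ and the conjecture follows.

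The main obstacle is the hydrodynamic analysis itself. The corresponding proof for the classical Hammersley process (Aldous--Diaconis~\cite{aldous1995hammersley}, Groeneboom~\cite{groeneboom2002hydrodynamical}) is already technically involved, and here the state variable is a measure on $(0,1)$ rather than a point process on a line, while the insertion step is a spatial best-fit matching that has no immediate one-dimensional analog. A further subtlety is justifying that wasted slots, those never receiving a child because all subsequent interval left-endpoints are too small, form a vanishing fraction; unlike the permutation case, an incoming interval can be blocked by the absence of a compatible left endpoint rather than merely by a filled parent. Controlling this wasted-slot probability and promoting it to the exact identity $d_k = 1/(k+1)$ is where I expect the decisive work to lie, and the clean form of the conjectured constant strongly suggests that a combinatorial identity or martingale computation, rather than brute-force PDE analysis, will ultimately provide the cleanest proof.
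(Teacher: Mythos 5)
First, note that the paper does not prove this statement at all: it is presented as a conjecture, supported only by simulations of the interval Hammersley process (the paper even poses the hydrodynamic analysis you invoke as a separate open problem). So your proposal must be judged as an attempted proof of an open statement, and as such it has genuine gaps. The most concrete one is the spatial-splitting subadditivity step. A random interval $[\min(a,b),\max(a,b)]$ with $a,b$ uniform on $(0,1)$ has expected length $1/3$ and contains the midpoint $1/2$ with probability $1/2$; more generally, the proportion of intervals crossing the cut points of any fixed dissection of $(0,1)$ is bounded away from zero, and refining the dissection makes the crossing fraction \emph{larger}, not smaller, since typical intervals are long. Hence the ``crossing set'' is a constant (indeed dominant) fraction of the sample, its penalty coefficient cannot be made arbitrarily small by iterating the split, and Fekete's lemma cannot be applied the way you describe. (Ironically, existence of the limit is more plausibly attacked by subadditivity under \emph{union of independent samples}, since any partitions of $W_n$ and $W_m$ into $k$-ary chains concatenate to a partition of $W_{n+m}$; but even that only yields existence, not positivity, of $d_k$.) Your concentration step also uses a non-deterministic bound (``absorbed after finitely many insertions in expectation''), which is not the bounded-differences hypothesis McDiarmid requires; one would instead argue that exchanging a single interval changes the optimal $k$-width by at most one.

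The second, and decisive, gap is the identification $d_k=1/(k+1)$. Your argument here is explicitly heuristic (``should emerge'', ``if this works''): the claim that each tree eventually collects $k+1$ intervals on average is precisely the content of the conjecture, restated rather than proved, and the proposed coupling with a dominating branching process is not specified in any checkable form. You correctly identify the obstacles (the state of the interval process is a measure of slots with availability constraints, wasted slots must be shown to be a vanishing fraction), but no mechanism is offered to overcome them. So the proposal is a reasonable research programme, parts of which echo the paper's own suggested route through the interval Hammersley process, but it does not constitute a proof, and its one concrete new ingredient (midpoint splitting plus Fekete) fails for random intervals.
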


\bibliographystyle{abbrvnat} 
\bibliography{heapability-dilworth}

\end{document}